\pdfoutput=1

\documentclass[12pt, a4paper]{article}
\usepackage{amsmath, amsthm, amssymb, enumitem, graphicx, color}
\usepackage[square,numbers]{natbib}
\bibliographystyle{abbrvnat}

\allowdisplaybreaks

\newtheorem*{theorem}{Theorem 1}
\newtheorem*{corollary}{Corollary}
\newtheorem{defin}{Definition}
\newtheorem{lemma}{Lemma}

\newcommand{\bZ}{\mathbb{Z}}

\newcommand{\vol}{\operatorname{vol}}

\begin{document}
\author{I. Beach and R. Rotman}
\title{The Length of the Shortest Closed Geodesic on a Surface of Finite Area}
\maketitle

\begin{abstract} 
	In this paper we prove new upper bounds for the length of a shortest
	closed geodesic, denoted $l(M)$, on a complete, non-compact Riemannian surface 
	$M$ of finite area $A$. We will show that $l(M) \leq 4\sqrt{2A}$ on a manifold with one end,
	 thus improving the prior estimate of 
	C. B. Croke, who first established that $l(M) \leq 31 \sqrt{A}$. Additionally, for a surface with at least two ends we show that
	$l(M) \leq 2\sqrt{2A}$, improving the prior estimate of
	Croke that $l(M) \leq (12+3\sqrt{2})\sqrt{A}$.
\end{abstract}

\section{Introduction}
	The purpose of this paper is to establish two new upper bounds for the length 
	of a shortest periodic (i.e., closed) geodesic on a complete, non-compact Riemannian surface $M$ of a finite area. We denote this length by $l(M)$.
	\par
	While the existence of a periodic geodesic on a closed Riemannian manifold 
	is a consequence of the topology of the manifold (first shown by A. Fet and L. Lusternik in 1951 in the general case), simple examples show
	that even a non-trivial fundamental group is not enough to guarantee the existence of a periodic geodesic in the non-compact case. Thus, to prove the existence 
	of a periodic geodesic on a complete, non-compact Riemannian manifold, one needs
	some combination of additional topological and geometric constraints. For example, V. Bangert asked whether on a complete, non-compact Riemannian manifold of finite volume there always exists at least one periodic geodesic (cf. \cite{burns2013}). While 
	this question was fully answered in the positive for surfaces-- first 
	by G. Thorbergsson \cite{thorbergsson1978} for surfaces with at least three ends, and later by V. Bangert \cite{bangert1980} for surfaces with one or two ends-- only partial answers to this question  are known in higher dimensions (see \cite{assellemazzucchelli2016, bencigiannoni1991}). 
	\par
	Once the existence of a periodic geodesic on a Riemannian manifold is established, it is natural to ask how the length of a shortest such geodesic compares to other parameters of the manifold, such as its volume or, in the compact case, its diameter. For example, M. Gromov has asked if there exists a uniform constant $c(n)$ such that $l(M) \leq c(n) \vol(M)^\frac{1}{n}$ (see \cite{Gromov1983}).	Here $n$ is the dimension of the manifold and $\vol(M)$ is its volume. While this question remains open in dimensions greater than two, it has been resolved in the case of surfaces. For a survey of early results in dimension two, one should see the paper \cite{CrokeKatz2003} by C. B. Croke and M. Katz. 
	\par
	In this article we will improve the prior bound of Croke for the length
	of a shortest closed geodesic on a complete, non-compact surface of finite 
	area $A$ from $31 \sqrt{A}$ (see \cite{croke1988}) to $4\sqrt{2A}$. In particular, we improve the bound in the worst two cases. The most difficult case is that of a manifold that is topologically a once-punctured sphere. Secondly, we improve the bound on a complete, non-compact Riemannian manifold homeomorphic to a sphere with two punctures. Our improvements are due to the following modifications:
	an optimal use of the co-area formula as in \cite{rotman2006}; 
	utilizing min-max techniques on the space of $1$-cycles instead of on the space of closed, piecewise differentiable curves on $M$, as in \cite{calabi1992,nabutovsky2002,rotman2006,sabourau2004}; 
	and using a length shortening technique on geodesic nets (which we realize as pairs of loops) as in \cite{rotman2011,rotman2019}. 
	Note that the main difficulty of proving the existence of (short) periodic geodesics on complete, non-compact manifolds manifests itself in the curve ``running away to infinity'' under the length shortening flow. 
	\par
	Finally, we expect that the new bound we obtain is still not optimal. For the cases where the surface is a sphere with three or fewer punctures, we expect that the optimal bound will be the same as that for a Riemannian 2-sphere of area $A$, which is conjectured to be $(12)^\frac{1}{4} \sqrt{A}$ (cf. \cite{calabi1992,croke1988}). 
	
\section{Results}
	
	We will show that every complete, non-compact surface of finite area $A$ admits a closed geodesic whose length is bounded by a constant multiple of $\sqrt{A}$. Such surfaces can be partially classified by the number of ``ends'' they have, which we define as follows.
	
	\begin{defin}
		Let $M$ be a complete surface. If $M$ is homeomorphic to a compact surface with $n$ punctures, then we say that $M$ has $n$ ends.
	\end{defin}

	As above, suppose $M$ has area $A$ and $n$ ends, and let $l(M)$ be the length of a shortest  closed geodesic in $M$. It was shown by Hebda \cite{hebda1982} and Burago and Zalgaller \cite{burago1980} that $l(M)\leq\sqrt{2A}$ for every orientable, non-simply connected, compact surface. It was shown by Croke in \cite{croke1988} that $l(M)\leq31\sqrt{A}$ in the case that $M$ is a sphere or a once-punctured sphere. For the spherical case, this estimate was further developed in \cite{calabi1992}, and has since been refined to $l(M)\leq 4\sqrt{2A}$ \citep{nabutovsky2002, rotman2006, sabourau2004}. We prove that this sharper estimate also holds for the punctured sphere. For surfaces with two ends, we refine the current best known bound of $l(M)\leq(12+3\sqrt{2})\sqrt{A}$, also provided by Croke \cite{croke1988}, to $l(M)\leq 2\sqrt{2A}$. Our methods also provide an identical bound for surfaces with more than two ends, but in these cases this bound has already been proved by Croke \cite{croke1988} (see also \cite{bangert1980}).
	\par 
	Our main result is summarized as follows.
	
	\begin{theorem}
		Suppose $M$ is a complete, orientable surface with finite area $A$ and $n$ ends. Let $l(M)$ be the length of a shortest closed geodesic on $M$.
		\begin{enumerate}
			\item If $n\leq1$, then $l(M)\leq 4\sqrt{2A}$.
			\item If $n\geq2$, then $l(M)\leq 2\sqrt{2A}$.
		\end{enumerate}
	\end{theorem}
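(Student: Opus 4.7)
My plan is to handle both cases within a common framework: construct a suitable $1$-parameter sweepout of $M$ by $1$-cycles whose maximum mass is controlled by $\sqrt{A}$ via an optimal use of the coarea formula in the spirit of \cite{rotman2006}, apply a min-max principle on the space of $1$-cycles to extract a stationary $1$-cycle, and then use the length-shortening flow on pairs of loops from \cite{rotman2011, rotman2019} to produce an honest closed geodesic from the (possibly degenerate) stationary configuration. The final length bound will then be, up to a controlled multiplicative factor of $2$, the maximum mass of the sweepout.

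For part (2), where $M$ has $n \geq 2$ ends, I would use the ends themselves to build a short noncontractible cycle. Let $\alpha$ be a proper curve joining points pushed far into two distinct ends of $M$ and consider the level sets of the distance function to $\alpha$. For every $t$ in a suitable range, the level set $\{d(\,\cdot\,,\alpha) = t\}$ has a component that separates the two chosen ends, and hence is noncontractible in $M$. The coarea inequality yields such a level set of length at most $\sqrt{2A}$, giving a nontrivial $1$-cycle of mass at most $\sqrt{2A}$. The min-max together with the pair-of-loops shortening then produces a closed geodesic of length at most $2\sqrt{2A}$.

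For part (1), the compact case $n=0$ already satisfies the bound $4\sqrt{2A}$ via the sphere min-max of \cite{nabutovsky2002, rotman2006, sabourau2004}, so the essential new content is the once-punctured sphere $n=1$. I would first truncate $M$ by cutting along a short level set $\partial B(p, R)$, chosen via coarea so that it has length bounded by a small multiple of $\sqrt{A}$ and cuts off a neighborhood of the puncture of arbitrarily small area. On the resulting compact surface with short boundary, I would construct a $1$-parameter sweepout of $1$-cycles whose homotopy class in the cycle space is nontrivial and whose maximum mass is at most $2\sqrt{2A}$, imitating the optimal sweepout for the sphere. The min-max plus pair-of-loops shortening then produces a closed geodesic of length at most $4\sqrt{2A}$.

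The main obstacle is exactly the one flagged in the introduction: under the length-shortening flow, the min-max sweepout can slide out to infinity through an end, so that no closed geodesic appears in the limit. To control this I would combine the truncation above with an area-barrier argument: the region cut off by the truncation has area much smaller than the bound on the sweepout's mass, so any cycle that slips into this region either contracts to a point inside it (contradicting the nontriviality of the sweepout class) or must cross back over the short truncation curve with a controlled length penalty. The remaining quantitative work is then the careful balancing of the size of the truncation curve, the masses of the sweepout, and the doubling factor coming from the pair-of-loops shortening, in order to obtain the clean constants $4\sqrt{2A}$ and $2\sqrt{2A}$.
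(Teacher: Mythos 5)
Your high-level framework (coarea for short cycles, min-max in $Z_1$, a pair-of-loops shortening) matches the toolbox the paper draws from, but the plan has two concrete gaps exactly at the point the paper identifies as the crux: preventing the shortening flow from sliding to infinity.

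For $n \geq 2$, you produce a single short separating cycle of length at most $\sqrt{2A}$ and then assert that ``min-max together with pair-of-loops shortening'' yields a closed geodesic. But a single short cycle is not a sweepout, and nothing in the proposal explains how a pair of loops with the crucial structure arises. The paper instead takes a minimizing geodesic \emph{line} $\tau$, shows (Lemma \ref{finite_area_loops_convexity}, via coarea) that the shortest essential geodesic loop based at $\tau(t)$ cannot be convex toward the same end for all $t$, and so there is a $t_0$ admitting \emph{two} shortest loops with common vertex, one convex toward each end. That disjoint-convexity is what Lemma \ref{core-filling} uses to forbid escape: if the loop pair drifts into the convex neighborhoods of the ends, quotienting those neighborhoods out produces a continuous family of topological spheres $\{M_t\}$ ending in a point, contradicting that $S^2$ is not contractible. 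Your proposal contains no analogue of this mechanism.

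For $n = 1$, the truncation-plus-area-barrier idea is not precise enough to work as stated. Cutting along a short $\partial B(p,R)$ gives a compact piece, but its boundary is not a geodesic and need not be convex, so the Birkhoff flow applied to a sweepout on the truncated surface can push curves out across the boundary; and a cycle can sit entirely inside the thin end without ``crossing back with a length penalty,'' since small area alone bounds neither length nor diameter of such a cycle. The paper's treatment of this case is genuinely different: it first establishes (again via coarea and loop convexity) a dichotomy --- either there already exist two loops with common vertex convex to disjoint regions (fed into Lemma \ref{core-filling}), or \emph{every} short geodesic loop is convex toward the single end. In the latter situation it invokes a noncompact Berger's-lemma argument at a farthest point $y$ from the vertex of a fixed loop $\gamma$, builds three convex closed curves $T_1, T_2, T_3$ and a fourth curve $\eta$, shows these shorten within convex regions to points (respectively, to infinity), concludes $M$ is a plane, compactifies to $\hat{M}\simeq S^2$, and checks via Almgren's isomorphism $Z_1(\hat{M},\bZ)\simeq H_2(\hat{M},\bZ)$ that the resulting loop in cycle space is noncontractible, yielding a geodesic of length at most $4\sqrt{2A}$. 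None of this structure is present in your proposal, and without it the escape-to-infinity obstruction remains open.
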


	The compact case ($n=0$) is dealt with in the literature as described above. Therefore we only consider surfaces with at least one end. As noted by Croke in \cite{croke1988}, each end of a surface with finite area contains a convex neighbourhood of infinity bounded by a geodesic loop (i.e., a geodesic segment that is also a closed curve). Geodesic loops have useful convexity properties which we will use extensively in our proof of the above theorem. For our purposes, the definition of convexity is as follows (cf. \cite{croke1988}).
	
	\begin{defin}
		Let $\gamma$ be a closed curve bounding a region $\Omega$. Then $\gamma$ is said to be convex to $\Omega$ if there is some $\epsilon>0$ such that for all $x,y\in\overline{\Omega}$ with $d(x,y)<\epsilon$, the minimizing geodesic segment between $x$ and $y$ lies within $\overline{\Omega}$. A connected region $\Omega$ is called convex if each component of $\partial\Omega$ is convex to $\Omega$.
	\end{defin} 
	
	Note that a geodesic loop is convex to a region it bounds exactly when the inward-facing angle (i.e., the angle as measured within the region) formed at its vertex is less than or equal to $\pi$.	Similarly, a closed geodesic is convex to any region it bounds.
	\par
	Convex regions are useful because they are well-behaved under the Birkhoff curve shortening process. This algorithm, a thorough analysis of which can be found in \cite{croke1988}, produces a length-reducing homotopy starting from a prescribed initial curve. The curves in this homotopy converge to either a single point, a closed geodesic, or a point at infinity. Additionally, one can fix a point $x$ of the initial curve during this process in order to produce a length-shortening homotopy that will terminate either in the point curve $x$ or a geodesic loop based at $x$. 
	Convexity of the initial curve allows us to apply the following lemma (cf. Lemma 2.2 in \cite{croke1988}).
	
	\begin{lemma}
		\label{convex_traps_loops}
		Let $\Omega$ be a convex region. Let $\gamma\subset \overline{\Omega}$ be a closed curve and let $\gamma_t$ be any curve in the homotopy produced by applying the Birkhoff curve shortening process to $\gamma$. Then $\gamma_t\subset \overline{\Omega}$. \qed
	\end{lemma}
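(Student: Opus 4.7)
\medskip

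\noindent\textbf{Proof plan.} The plan is to reduce the claim to the defining local property of convexity by analyzing the Birkhoff process one minimizing-geodesic replacement at a time. First I would fix a discretization parameter $N$ (in the sense of \cite{croke1988}) large enough that every subarc of $\gamma$ used in the construction has length less than the $\epsilon$ furnished by the convexity of each boundary component of $\Omega$; since $\gamma$ has finite length and $\gamma\subset\overline{\Omega}$ is compact, only finitely many boundary components are relevant and a single $\epsilon>0$ works simultaneously. The Birkhoff homotopy $\gamma_t$ is then built by iterating two elementary moves: sampling $\gamma$ at $2N$ evenly spaced points and replacing, alternately, the odd-indexed and even-indexed arcs by the minimizing geodesic segments joining their endpoints, while the parameter $t$ linearly interpolates between the original arc and its geodesic replacement through a continuous family of curves that share the same endpoints.

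The heart of the argument is the following invariant, which I would establish by induction on the number of completed replacement moves: at every stage of the process, all sampled vertices lie in $\overline{\Omega}$ and all geodesic arcs between consecutive vertices lie in $\overline{\Omega}$. For the inductive step, consecutive vertices are at distance less than $\epsilon$ by the choice of $N$ (and the fact that Birkhoff replacements do not increase the distance between consecutive vertices), so the defining property of convexity in the definition above places the minimizing geodesic between them inside $\overline{\Omega}$. This handles the endpoints of the homotopy between iterations; for the interpolating parameter $t$, I would note that the standard interpolation used in \cite{croke1988} is itself a family of minimizing geodesics between points of $\overline{\Omega}$ at distance less than $\epsilon$, so the same convexity property applies pointwise in $t$.

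The main potential obstacle is the ambiguity when $\gamma$, or some intermediate $\gamma_t$, meets $\partial\Omega$: a minimizing geodesic between two points of $\partial\Omega$ could in principle leave $\overline{\Omega}$, and minimizing geodesics between nearby points need not be unique on the boundary. However, this is precisely what the convexity hypothesis rules out at scale $\epsilon$, so the inductive step goes through provided we consistently choose the minimizing geodesic supplied by the definition (any such choice lies in $\overline{\Omega}$). Concatenating the finite inductive argument over all Birkhoff iterations and all interpolation parameters $t$ yields $\gamma_t\subset\overline{\Omega}$ for every $t$, completing the proof.
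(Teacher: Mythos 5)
The paper does not actually prove this lemma: it is stated with a \verb|\qed| and referenced to Lemma~2.2 of Croke's paper \cite{croke1988}, so the comparison is with Croke's argument. Your proof is the same standard inductive local argument that Croke gives: choose the discretization fine enough relative to the convexity scale $\epsilon$, and propagate containment in $\overline{\Omega}$ through each minimizing-geodesic replacement and through the interpolating homotopy, using that each such step inserts a minimizing geodesic joining two points of $\overline{\Omega}$ at distance less than $\epsilon$. Your outline is correct.

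Two minor imprecisions are worth tightening. First, you invoke the fact that ``Birkhoff replacements do not increase the distance between consecutive vertices,'' but that is not the relevant invariant: what matters is that replacements do not increase \emph{total length}, so that after the curve is re-sampled at $2N$ equally spaced points the consecutive vertices are at arc-length spacing at most $L(\gamma)/(2N) < \epsilon$, and hence at metric distance below $\epsilon$. That monotonicity of length, not a pointwise claim about vertex pairs, is what keeps the $\epsilon$-closeness invariant across iterations. Second, the uniform $\epsilon$ should be justified by the fact that $\partial\Omega$ consists of finitely many components (closed curves), so one may take the minimum of the convexity scales, rather than by compactness of the initial curve $\gamma$: the later curves $\gamma_t$ need not stay near the original $\gamma$, so the set of ``relevant'' boundary components is a priori all of them. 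Neither point changes the conclusion, and your handling of the boundary case and of non-uniqueness of short minimizing segments is appropriate.
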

	
	The proof of our main result considers two separate cases. If $M$ has one end, either every geodesic loop is convex to an unbounded region (which we will call being ``convex to infinity''), or there are two geodesic loops with a shared vertex point that are convex to disjoint sets. Conversely, if $M$ has two or more ends, the latter statement is always true. The proofs of these statements are established in Section \ref{secnets}. In the first case, in the proof of Lemma \ref{2_ends_core} we apply a variation of Berger's lemma along with the curve shortening process to produce a map from the round sphere to a compactified version of $M$. The existence of such a map will imply the existence of a closed geodesic of known length. In Section \ref{secnet_shortening} we deal with the second case and show that, in the absence of short closed geodesics, the curve formed by the union of the two geodesic loops with a shared vertex must shorten to a closed geodesic of bounded length.
	\par 
	Unfortunately, we do not expect that the provided bounds are sharp. An illustrative example is the singular sphere produced by gluing together two congruent equilateral triangles of height $h$. This so-called Calabi-Croke sphere is conjectured to provide the largest value of $l(M)$ relative to $\sqrt{A}$ in the case that $M$ is a sphere  (cf. \cite{calabi1992,croke1988}, see also \cite{balacheff2010,sabourau2010}). It has several shortest geodesics of length $2h$: three simple loops along its altitudes and three figure-eights. In this case, we have
	\begin{align*}
	l(M)
%	=\frac{2h}{(\sqrt{2}/3^{1/4})h} \sqrt{A}
	=(12)^\frac{1}{4} \sqrt{A}\approx 1.316\sqrt{2A}
	\end{align*} 
	Adding a thin, infinitely long cusp to one of the vertices does not change this constant. When additional ends are added, new figure-eight geodesics are formed by wrapping around at least two ends. However, adding up to three ends to the vertices of the Calabi-Croke sphere does not decrease $l(M)$, as the new figure eights are approximately of length $(4/\sqrt{3})h$. Thus for the cases where $n\leq 3$ we still expect a sharp bound of $(12)^\frac{1}{4}\sqrt{A}$.
	
\subsection{Short Geodesic Loops}
	\label{secloops}
	
	In order to use geodesic loops to produce closed geodesics of bounded length, we need to first bound the lengths of the loops themselves. To that end, we prove the following lemma, a sharpening of the bound provided in \cite{croke1988} (cf. \cite{rotman2006}). This lemma ensures the existence of short geodesic loops encircling the ends of $M$, as described in Lemma \ref{short_loops}.
	In the lemma below and throughout this paper, $L(\gamma)$ denotes the length of the curve $\gamma$. Note also that every geodesic is assumed to be parametrized by arc length.	

\begin{lemma}
	\label{min_closed_curve}
	Suppose $M$ is a complete, orientable surface of finite area $A$. Let $x,y\in M$ and let $\tau$ be a minimizing geodesic segment from $x$ to $y$. If there exists some $w\in \tau$ such that $d(x,w)> \sqrt{A/2}$ and $d(y,w)> \sqrt{A/2}$, then there is a closed curve $\gamma$ through $w$ of length at most $\sqrt{2A}$ that is essential (i.e., not null-homotopic) in $M\setminus\{x,y\}$.
\end{lemma}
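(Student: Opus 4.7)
The plan is to apply the co-area formula to the distance function from $w$ and close a short level arc with a piece of $\tau$ to form a loop through $w$. Since the hypothesis gives $d(x, w), d(y, w) > \sqrt{A/2}$, the ball $B(w, \sqrt{A/2})$ is contained in $M \setminus \{x, y\}$, and the co-area inequality yields
\[
A \;\geq\; \int_0^{\sqrt{A/2}} L(\partial B(w, r))\,dr.
\]
I then locate $r^* \in (0, \sqrt{A/2})$ satisfying $2r^* + \tfrac12 L(\partial B(w, r^*)) \leq \sqrt{2A}$. Existence is by contradiction: if $L(\partial B(w, r)) > 2\sqrt{2A} - 4r$ held throughout $(0, \sqrt{A/2})$, integrating would produce
\[
A \;\geq\; \int_0^{\sqrt{A/2}} L(\partial B(w, r))\,dr \;>\; \int_0^{\sqrt{A/2}} (2\sqrt{2A} - 4r)\,dr \;=\; A,
\]
a contradiction.

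Next, parametrize $\tau$ so that $\tau(0) = w$. Since $\tau$ is a minimizing geodesic, the points $a := \tau(r^*)$ and $b := \tau(-r^*)$ both lie on $\partial B(w, r^*)$. Let $\alpha$ be the shorter of the two arcs of the connected component of $\partial B(w, r^*)$ that contains $a$ and $b$; then $L(\alpha) \leq L(\partial B(w, r^*))/2$. The concatenation $\gamma := \tau|_{[-r^*, r^*]} \ast \alpha$ is a closed curve through $w$ of length
\[
L(\gamma) \;\leq\; 2r^* + L(\partial B(w, r^*))/2 \;\leq\; \sqrt{2A}.
\]

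The main obstacle is verifying that $\gamma$ is essential in $M \setminus \{x, y\}$. My approach is by contradiction: if $\gamma$ bounded a disk $D \subset M \setminus \{x, y\}$, then $\alpha$ and $\tau|_{[-r^*, r^*]}$ would be path-homotopic rel endpoints in $M \setminus \{x, y\}$, so the rerouted path $\tau' := \tau|_{[0, s_0 - r^*]} \ast \alpha \ast \tau|_{[s_0 + r^*, L]}$ from $x$ to $y$ would be rel-homotopic to $\tau$ in $M \setminus \{x, y\}$. The minimizing property of $\tau$ then forces $L(\alpha) \geq 2r^*$; for a regular value $r^*$ at which the minimizing geodesic between $a$ and $b$ is unique, this forces $\alpha$ to coincide with $\tau|_{[-r^*, r^*]}$, which contradicts the construction of $\alpha$ as an arc of $\partial B(w, r^*)$ distinct from $\tau$. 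Non-generic values of $r^*$ can be handled by small perturbations, and the delicate point requiring the most care is ensuring that the disk $D$ genuinely produces this forced coincidence rather than extending beyond $B(w, r^*)$ in a way that invalidates the uniqueness argument. This essentiality step --- reconciling the co-area construction with the minimizing property of $\tau$ --- is where I expect the principal technical difficulty to lie.
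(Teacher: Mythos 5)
Your construction is genuinely different from the paper's, and it contains a gap at the essentiality step that I don't think can be patched as written. You close the short arc $\alpha$ of $\partial B(w,r^*)$ against the diameter $\tau|_{[-r^*,r^*]}$. Two problems arise. First, there is no reason for $a=\tau(r^*)$ and $b=\tau(-r^*)$ to lie on the same connected component of $\partial B(w,r^*)$; on a long thin tube centred at $w$ the sphere $\partial B(w,r^*)$ has two components, one containing $a$ and one containing $b$, so ``the component containing $a$ and $b$'' may not exist. Second, and more fundamentally, even when $a$ and $b$ do lie on one circle $C$, the loop $\gamma=\tau|_{[-r^*,r^*]}\ast\alpha$ is \emph{typically null-homotopic} in $M\setminus\{x,y\}$: since $x,y\notin B(w,\sqrt{A/2})$, the half of $B(w,r^*)$ cut off by the diameter and bounded by $\alpha$ lies entirely in $M\setminus\{x,y\}$ and, when $B(w,r^*)$ is a disk, fills $\gamma$. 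Your contradiction argument does not detect this: the inequality $L(\alpha)\geq 2r^*$ you derive holds \emph{unconditionally}, because $\tau|_{[-r^*,r^*]}$ is a subarc of a minimizing geodesic and hence is already the global minimizer from $a$ to $b$; the null-homotopy hypothesis is never used, and $L(\alpha)>2r^*$ is perfectly consistent with everything (you only know $L(\alpha)\leq\tfrac12 L(\partial B(w,r^*))$), so no forced coincidence $\alpha=\tau|_{[-r^*,r^*]}$ occurs.

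The paper avoids both problems by using the distance to the \emph{endpoint} $x$ rather than to $w$, and by taking an \emph{entire} level-set component rather than a chord. Applying the co-area formula over the interval $[t_0-\sqrt{A/2},\,t_0+\sqrt{A/2}]$ (with $t_0=d(x,w)$) against the tent-shaped budget $\sqrt{2A}-2|t-t_0|$ produces a regular $t$ with $L(S(x,t))\leq\sqrt{2A}-2|t-t_0|$; the component $\sigma$ of $S(x,t)$ through $\tau(t)$ is crossed by $\tau$ exactly once (because $\tau$ is minimizing from $x$, so $d(x,\tau(s))=s$), hence $\sigma$ either is non-separating or separates $x$ from $y$, and in either case is essential in $M\setminus\{x,y\}$. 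The curve $\gamma=\tau|_{[t_0,t]}\cup\sigma\cup-\tau|_{[t_0,t]}$ is then a lollipop freely homotopic to $\sigma$ with total length at most $\sqrt{2A}$. The single-crossing observation is what makes the essentiality argument immediate, and it is exactly what is lost when you re-centre the distance function at $w$ and keep only a sub-arc of the sphere.
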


\begin{figure}[h]\color{black}
	\centering
	\def\svgwidth{1\textwidth}
	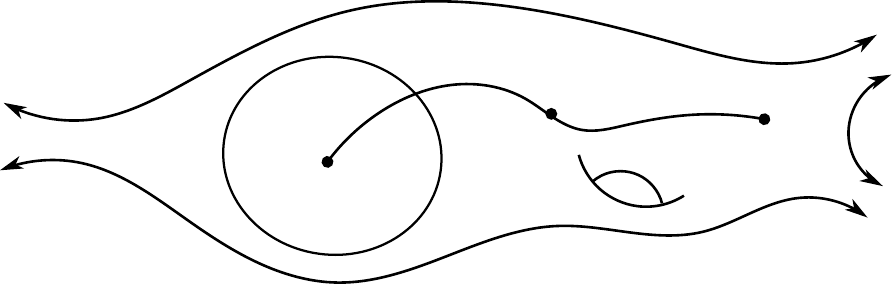
	\caption{The construction of the closed curve $\gamma$ (the dashed line).}
	\label{fig:closed_curve_construction}
\end{figure}

\begin{proof}
	Define $t_0$ such that $w=\tau(t_0)$ and hence $d(x,w)=t_0>\sqrt{A/2}$ (note that we asserted $x=\tau(0)$). 
	Therefore, by the co-area formula,
	\begin{align*}
		\int_{t_0-\sqrt{A/2}}^{t_0+\sqrt{A/2}}\left(\sqrt{2A}-2|t-t_0|\right)dt
		&= A\geq \int_{t_0-\sqrt{A/2}}^{t_0+\sqrt{A/2}}L(S(x,t))dt,
	\end{align*}
	where $S(x,t)=\{p\in M\mid d(p,x)=t\}$.
	Thus there is some $t\in(t_0-\sqrt{A/2},t_0+\sqrt{A/2})$ such that $S(x,t)$ consists of simple closed curves and $L(S(x,t))\leq \sqrt{2A}-2|t-t_0|$. Let $\sigma$ be the component of $S(x,t)$ that crosses $\tau(t)$, as in Figure \ref{fig:closed_curve_construction}. Then $\sigma$ is essential in $M\setminus\{x,y\}$, since it is either essential in $M$ or it separates $M$ into two regions, one containing $x$ and the other $y$. Therefore $\gamma=\tau\mid_{[t_0,t]}\cup\sigma\cup-\tau\mid_{[t_0,t]}$ is a closed curve passing through $w$ that is essential in $M\setminus\{x,y\}$ and has length
	\begin{align*}
		L(\gamma)=2L(\tau\mid_{[t_0,t]})+L(\sigma)\leq2|t-t_0|+\sqrt{2A}-2|t-t_0|=\sqrt{2A},
	\end{align*}
	as required.
\end{proof}
	
	In the case that $M$ contains a minimizing geodesic ray or line, we will now show that the above lemma implies the existence of short geodesic loops.
	Since we are only considering surfaces with at least one end, every point in $M$ has at least one minimizing geodesic ray emanating from it.
	For example, one can take a limit of minimizing geodesic segments (up to a subsequence) between a given point and a sequence of points that tends to infinity.  If $M$ has at least two ends, then we can similarly ensure the existence of a minimizing geodesic line.
	Therefore we can make effective use of the following lemma (cf. Lemma 3.3 in \cite{croke1988}, which only considers the cases where $M$ a plane, sphere or cylinder). Note that ``ray'' always refers to a minimizing geodesic ray, and similarly a ``line'' is a minimizing geodesic line.

\begin{lemma}
	\label{short_loops}
	Suppose $M$ is a complete, orientable surface with finite area $A$. Let $\tau$ be either a ray based at some arbitrary $x$ or a line. Pick $w\in \tau$, where if $\tau$ is a ray we require that $d(x,w)>\sqrt{A/2}$. Then there exists a geodesic loop based $w$ that is essential in $M\setminus\{x\}$ (or $M$) and intersects $\tau$ only at $w$. 
	Moreover, there is a shortest such loop which is additionally simple and has length at most $\sqrt{2A}$. If $\gamma$ and $\gamma'$ are two such loops, then $\gamma'\cap\gamma = w$.
\end{lemma}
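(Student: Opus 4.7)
\emph{Proof plan.} The plan is to produce a short initial loop via Lemma \ref{min_closed_curve}, shorten it via the Birkhoff curve-shortening process (BCP) to obtain a geodesic loop, and then use shortcut arguments exploiting the minimality of $\tau$ to establish the intersection conditions.

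To obtain the initial loop, I would pick a point $y$ on $\tau$ with $d(w, y) > \sqrt{A/2}$ (in the line case, also picking $x$ on the opposite side of $w$ with $d(w, x) > \sqrt{A/2}$) and apply Lemma \ref{min_closed_curve} to the minimizing segment of $\tau$ from $x$ to $y$. This yields a closed curve $\gamma_0$ through $w$ of length at most $\sqrt{2A}$, essential in $M \setminus \{x, y\}$; its construction exhibits it as a back-and-forth arc along $\tau$ joined to a simple closed component $\sigma$ of a level set of $d(\cdot, x)$. Since $\sigma$ separates a region containing $x$ from its complement, a short topological argument upgrades essentiality: in the ray case, any disk bounded by $\sigma$ must lie on the side containing $x$, so $\sigma$ is essential in $M\setminus\{x\}$; in the line case, by taking $y$ (and $x$) far along the line, $\sigma$ separates two distinct ends of $M$ and hence cannot bound a disk in $M$.

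Applying BCP to $\gamma_0$ with $w$ fixed produces a length-nonincreasing family of loops at $w$, all contained in $\overline{B(w, \sqrt{A/2})}$. This ball is compact by Hopf--Rinow and, since the hypothesis $d(w,x) > \sqrt{A/2}$ is strict, does not contain $x$, so the curves remain in $M\setminus\{x\}$ throughout. The limit is therefore a geodesic loop $\gamma^\ast$ at $w$ (rather than a point curve), since the homotopy class is non-trivial throughout; it is essential in $M\setminus\{x\}$ (or $M$) and has length at most $\sqrt{2A}$. A shortest such geodesic loop exists by an Arzel\`a--Ascoli argument (sequences of such loops remain in the same compact set, preserve their essentiality class under uniform convergence, and inherit the geodesic property in the limit).

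For the intersection conditions, suppose $\gamma^\ast$ (now taken to be of minimal length) met $\tau$ at some $\tau(s) \neq w$; split $\gamma^\ast$ at $\tau(s)$ into arcs $\alpha, \beta$ from $w$ to $\tau(s)$ and back. The two shortcut candidates $\alpha \cdot \tau|_{[t_0, s]}^{-1}$ and $\tau|_{[t_0, s]} \cdot \beta$ have $\pi_1$-classes whose product is $[\gamma^\ast] \neq 1$, so at least one is essential. Uniqueness of geodesics prevents $\gamma^\ast$ from coinciding with $\tau$ on any sub-arc (else $\gamma^\ast$ would continue along $\tau$ indefinitely and fail to close up into a loop), so each shortcut has a genuine corner at $\tau(s)$; applying BCP to the essential shortcut then yields a strictly shorter geodesic loop, contradicting the minimality of $\gamma^\ast$. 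The identical argument, applied to a hypothetical self-intersection of $\gamma^\ast$ at some $p \neq w$ (split into a loop at $p$ and a loop at $w$) or to an intersection of two minimizing loops $\gamma, \gamma'$ away from $w$, yields the simplicity of $\gamma^\ast$ and the conclusion $\gamma \cap \gamma' = \{w\}$. The main obstacle is the careful treatment of equality cases in these shortcut arguments, where one must combine the smoothness of $\gamma^\ast$ away from $w$ with the strict minimality of $\tau$ to ensure that the corner produced by the shortcut genuinely admits strict length reduction via BCP.
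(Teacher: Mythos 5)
Your approach matches the paper's: build a short essential curve through $w$ from Lemma~\ref{min_closed_curve}, minimize length over essential curves through $w$ to get a simple geodesic loop, and rule out extraneous intersections with $\tau$ (and between two such loops) by shortcut arguments. Two small remarks on where the paper and your write-up diverge. First, the paper obtains the initial essential curve as a limit of curves $\eta_i$ coming from Lemma~\ref{min_closed_curve} applied to segments $\tau|_{[x_i,y_i]}$ with $y_i\to\tau(\infty)$ (and $x_i\to\tau(-\infty)$ in the line case); you instead pick a single far $y$ (and $x$) and argue directly that the resulting level-set component $\sigma$ cannot bound a disk on the unbounded side. In the ray case this is fine, since $\tau|_{(t,\infty)}$ lies on the $y$-side of $\sigma$ and forces it to be unbounded. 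In the line case, though, your claim that ``$\sigma$ separates two distinct ends of $M$'' silently uses that the two rays of a minimizing line escape to distinct ends; this is true on a finite-area surface (short cross-sections far out in a single end would contradict minimality of $\tau$) but needs to be stated. The paper's sequence-and-limit formulation sidesteps the issue: if some $\eta_i$ bounded a disk in $M$, that disk would be compact with boundary in the fixed ball $\overline{B(w,\sqrt{A/2})}$ yet contain a far-away $x_i$ or $y_i$, which is impossible for $i$ large. Second, rather than first running BCP with $w$ fixed and then taking an Arzel\`a--Ascoli limit over the resulting geodesic loops, the paper minimizes length directly over the set $U$ of essential curves of length at most $L(\eta)$ through $w$; this is cleaner, but your two-step version is correct. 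You rightly flag the equality cases in the shortcut argument (when an arc of $\gamma^\ast$ is itself minimizing from $\tau(s)$ to $w$); the resolution is that either $\gamma^\ast$ would have to coincide with $\tau$ on an arc, contradicting that $\gamma^\ast$ closes up, or one produces another shortest essential curve with a genuine corner at an interior point, which can then be cut. This is the same delicacy implicit in the paper's ``shortest sub-loop'' phrasing.
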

\begin{proof}
	Let $\{x_i\},\{y_i\}$ be sequences of points on $\tau$ such that $d(x_i,w)>\sqrt{A/2}$, $d(y_i,w)>\sqrt{A/2}$, $w$ lies between $x_i$ and $y_i$, $y_i\to\tau(\infty)$, and $x_i=x$ if $\tau$ is a ray and $x_i\to\tau(-\infty)$ otherwise. Then for every $i$ we can apply Lemma \ref{min_closed_curve} to the segment of $\tau$ connecting $x_i$ to $y_i$ to obtain a closed curve $\eta_i$ that is essential in $M\setminus\{x_i,y_i\}$, passes through $w$, and has length at most $\sqrt{2A}$. Taking the limit of a convergent subsequence of these curves, we see that there exists a closed curve $\eta$ through $w$ that is essential in $M\setminus\{x\}$ (or $M$), and has length $L(\eta)\leq \sqrt{2A}$. 
	\par
	Let $U$ be the set of all closed curves $\sigma$ with $L(\sigma)\leq L(\eta)$ that pass through $w$ and are essential in $M\setminus\{x\}$ (or $M$). Since $L(\eta)/2<d(x,w)$, there is a neighbourhood of $x$ that no curves in $U$ pass through, and hence no sequence of curves in $U$ converges to a point curve. Therefore there exists a non-trivial shortest $\gamma\in U$, which is necessarily a simple geodesic loop by minimality. Moreover, $\gamma$ intersects $\tau$ only at $w$, as otherwise we could take a shortest sub-loop of $\gamma$ that intersects $\tau$ and connect it via an arc of $\tau$ to $w$ to obtain a strictly shorter curve that could be shortened to a loop with the desired properties. By a similar argument, we remark that if multiple distinct choices for $\gamma$ exist, they intersect only at $w$.
\end{proof}
\begin{corollary}
	Let $M$ be as above and let $\tau$ be a ray based at some $x$. Then there exists a shortest geodesic loop $\gamma$ through $\tau(\sqrt{A/2})$ that is essential in $M\setminus\{x\}$, intersects $\tau$ only at $\tau(\sqrt{A/2})$, is simple, and has length at most $\sqrt{2A}$. \qed
\end{corollary}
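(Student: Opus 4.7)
The natural approach is to apply Lemma \ref{short_loops} at a sequence of base points $w_n = \tau(\sqrt{A/2} + 1/n)$ approaching $w = \tau(\sqrt{A/2})$, and then pass to a limit. For each $n$ the condition $d(x,w_n) > \sqrt{A/2}$ is satisfied, so the lemma produces a simple geodesic loop $\gamma_n$ based at $w_n$ that is essential in $M \setminus \{x\}$, intersects $\tau$ only at $w_n$, and has length at most $\sqrt{2A}$. Since the $\gamma_n$ are parametrized by arc length and uniformly bounded in length, Arzelà--Ascoli yields (after passing to a subsequence) a uniform limit $\gamma_0$, a closed Lipschitz curve based at $w$.

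Next I would verify that $\gamma_0$ lies in the relevant class. Lower semicontinuity of length gives $L(\gamma_0) \leq \sqrt{2A}$. To rule out collapse to a point, fix a geodesic disk neighbourhood $B$ of $w$ which is simply connected and disjoint from $x$; any closed curve through $w$ of length less than twice the radius of $B$ lies in $B$ and is therefore null-homotopic in $M \setminus \{x\}$. Since each $\gamma_n$ is essential in $M \setminus \{x\}$, the $L(\gamma_n)$ are bounded below by a fixed positive constant, so $L(\gamma_0) > 0$. For essentiality of $\gamma_0$, observe that a thin tubular neighbourhood of $\gamma_0$ is an annulus disjoint from $x$, and for large $n$ the curve $\gamma_n$ lies in this neighbourhood and is freely homotopic to $\gamma_0$ inside it, so $\gamma_0$ inherits the homotopy class of the $\gamma_n$.

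With the set $U$ of essential closed curves through $w$ in $M \setminus \{x\}$ of length at most $\sqrt{2A}$ now known to be non-empty, I would run the variational argument of Lemma \ref{short_loops} directly at $w$: the infimum $L^* = \inf_{\sigma \in U} L(\sigma)$ is positive (by the same disk-neighbourhood argument), and a minimizing sequence has a further Arzelà--Ascoli limit $\gamma \in U$ realising $L^*$. Minimality forces $\gamma$ to be a simple geodesic loop, and the argument given at the end of the proof of Lemma \ref{short_loops} applies verbatim to show that $\gamma$ meets $\tau$ only at $w$: an extra intersection at $w' \neq w$ would split $\gamma$ into two arcs which, closed up by the minimizing $\tau$-segment from $w$ to $w'$, produce shorter essential closed curves, contradicting minimality after further shortening.

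The main obstacle is the borderline nature of the estimate $d(x,w) = \sqrt{A/2}$, which is exactly the case excluded by the strict hypothesis of Lemma \ref{short_loops}. In Lemma \ref{short_loops}, non-collapse was guaranteed by the gap $L(\eta)/2 < d(x,w)$; here that gap closes, and it must be replaced by the purely local observation that a sufficiently short loop through $w$ is trapped in a disk around $w$ not containing $x$ and is therefore inessential in $M \setminus \{x\}$. Once this uniform lower bound on lengths is in hand, both the existence step and the minimization step go through as above.
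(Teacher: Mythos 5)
You rightly notice that the corollary sits exactly at the borderline of Lemma~\ref{short_loops}: with $w = \tau(\sqrt{A/2})$ one has $d(x,w) = \sqrt{A/2}$, not $> \sqrt{A/2}$, so the lemma as stated does not directly apply, and the inequality $L(\eta)/2 < d(x,w)$ that the lemma's proof uses to keep competitors away from $x$ becomes non-strict. The paper gives no argument for the corollary, treating it as immediate, so there is no internal proof to compare against; but the issue you flag is real and needs addressing.

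Your repair, however, has a gap at the essentiality step. You take the shortest loops $\gamma_n$ based at $w_n = \tau(\sqrt{A/2}+1/n)$, pass to an Arzel\`a--Ascoli limit $\gamma_0$, and argue that $\gamma_0$ is essential because a thin tubular neighbourhood of $\gamma_0$ is an annulus disjoint from $x$. That presumes $\gamma_0$ is an embedded closed curve avoiding $x$, and neither is established. Each $\gamma_n$ does avoid $x$, since every point of $\gamma_n$ is within $L(\gamma_n)/2 \leq \sqrt{A/2}$ of $w_n$ while $d(x,w_n)=\sqrt{A/2}+1/n$; but that $1/n$ margin vanishes in the limit. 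If $L(\gamma_n)\to\sqrt{2A}$, the curves $\gamma_n$ approach $x$, and $\gamma_0$ can pass through $x$ (it is then forced to be a concatenation of two minimizing geodesics from $w$ to $x$, each of length $\sqrt{A/2}$). Such a $\gamma_0$ is not even a curve in $M\setminus\{x\}$, there is no tubular neighbourhood of it missing $x$, and your argument does not rule out $L(\gamma_n)\to\sqrt{2A}$. So non-emptiness of $U$ is not actually proved, and the same issue recurs for the minimizing sequence in your second step.

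The missing ingredient is that the co-area estimate in Lemma~\ref{min_closed_curve} is in fact strict here. The integral $\int_{t_0-\sqrt{A/2}}^{t_0+\sqrt{A/2}} L(S(x,t))\,dt$ is bounded not by $A$ but by the area of the metric annulus $\{\,t_0-\sqrt{A/2}<d(x,\cdot)<t_0+\sqrt{A/2}\,\}$. Applying this at $t_0=\sqrt{A/2}$, that annulus is $\{0<d(x,\cdot)<\sqrt{2A}\}$, and because $\tau$ is a minimizing ray the open set $M\setminus\overline{B(x,\sqrt{2A})}$ is non-empty (it contains a ball around $\tau(\sqrt{2A}+1)$) and hence has positive area $\delta$. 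Thus $\int_{0}^{\sqrt{2A}} L(S(x,t))\,dt \leq A-\delta < A = \int_{0}^{\sqrt{2A}}(\sqrt{2A}-2|t-\sqrt{A/2}|)\,dt$, so on a set of positive measure $L(S(x,t)) < \sqrt{2A}-2|t-\sqrt{A/2}|$, and the construction yields an essential closed curve $\eta$ through $\tau(\sqrt{A/2})$ with $L(\eta)<\sqrt{2A}$ strictly. This restores the strict inequality $L(\eta)/2 < \sqrt{A/2} = d(x,w)$, after which the minimization argument from the proof of Lemma~\ref{short_loops} (compactness of $U$, simplicity, single intersection with $\tau$) applies verbatim at $w=\tau(\sqrt{A/2})$, with no need for the approximation $w_n\to w$ at all. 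Alternatively, the same strict bound, applied uniformly at $w_n$, gives $L(\gamma_n)\leq\sqrt{2A}-c$ for a fixed $c>0$, which keeps the $\gamma_n$ and their limit a definite distance from $x$ and rescues your tubular-neighbourhood argument.
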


\subsection{Obtaining Convex Loops}
	\label{secnets}
	
	The fact that our surface has finite area will allow us to find pairs of loops with certain nice properties-- in particular, if $M$ has at least two ends then we can always find a pair of geodesic loops with a common vertex such that the two loops are convex to disjoint regions. Once we have established conditions for the existence of these loops in Lemma \ref{1_end_core} and Lemma \ref{2_ends_core}, we will describe an algorithm in Lemma \ref{core-filling} that will produce a closed geodesic from such a  loop pair.

\begin{figure}[h]\color{black}
	\centering
	\def\svgwidth{1\textwidth}
	%% Creator: Inkscape inkscape 0.92.3, www.inkscape.org
%% PDF/EPS/PS + LaTeX output extension by Johan Engelen, 2010
%% Accompanies image file '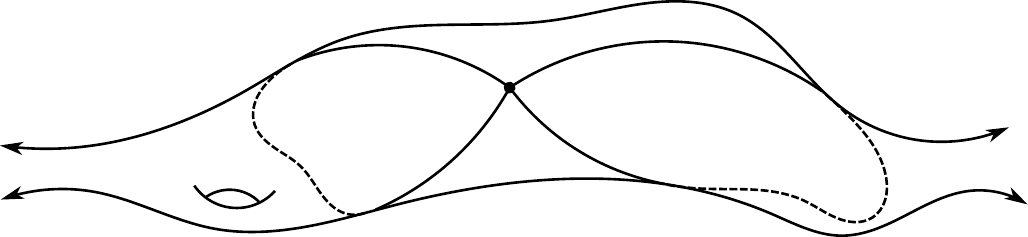' (pdf, eps, ps)
%%
%% To include the image in your LaTeX document, write
%%   \input{<filename>.pdf_tex}
%%  instead of
%%   \includegraphics{<filename>.pdf}
%% To scale the image, write
%%   \def\svgwidth{<desired width>}
%%   \input{<filename>.pdf_tex}
%%  instead of
%%   \includegraphics[width=<desired width>]{<filename>.pdf}
%%
%% Images with a different path to the parent latex file can
%% be accessed with the `import' package (which may need to be
%% installed) using
%%   \usepackage{import}
%% in the preamble, and then including the image with
%%   \import{<path to file>}{<filename>.pdf_tex}
%% Alternatively, one can specify
%%   \graphicspath{{<path to file>/}}
%% 
%% For more information, please see info/svg-inkscape on CTAN:
%%   http://tug.ctan.org/tex-archive/info/svg-inkscape
%%
\begingroup%
  \makeatletter%
  \providecommand\color[2][]{%
    \errmessage{(Inkscape) Color is used for the text in Inkscape, but the package 'color.sty' is not loaded}%
    \renewcommand\color[2][]{}%
  }%
  \providecommand\transparent[1]{%
    \errmessage{(Inkscape) Transparency is used (non-zero) for the text in Inkscape, but the package 'transparent.sty' is not loaded}%
    \renewcommand\transparent[1]{}%
  }%
  \providecommand\rotatebox[2]{#2}%
  \newcommand*\fsize{\dimexpr\f@size pt\relax}%
  \newcommand*\lineheight[1]{\fontsize{\fsize}{#1\fsize}\selectfont}%
  \ifx\svgwidth\undefined%
    \setlength{\unitlength}{295.92462726bp}%
    \ifx\svgscale\undefined%
      \relax%
    \else%
      \setlength{\unitlength}{\unitlength * \real{\svgscale}}%
    \fi%
  \else%
    \setlength{\unitlength}{\svgwidth}%
  \fi%
  \global\let\svgwidth\undefined%
  \global\let\svgscale\undefined%
  \makeatother%
  \begin{picture}(1,0.23063082)%
    \lineheight{1}%
    \setlength\tabcolsep{0pt}%
    \put(0,0){\includegraphics[width=\unitlength,page=1]{net_tex.pdf}}%
    \put(0.07070678,0.11191118){\color[rgb]{0,0,0}\makebox(0,0)[lt]{\lineheight{1.25}\smash{\begin{tabular}[t]{l}$M$\end{tabular}}}}%
    \put(0.34725759,0.15898034){\color[rgb]{0,0,0}\makebox(0,0)[lt]{\lineheight{1.25}\smash{\begin{tabular}[t]{l}$e_1$\end{tabular}}}}%
    \put(0.63784386,0.16029263){\color[rgb]{0,0,0}\makebox(0,0)[lt]{\lineheight{1.25}\smash{\begin{tabular}[t]{l}$e_2$\end{tabular}}}}%
    \put(0.58337882,0.00054422){\color[rgb]{0,0,0}\makebox(0,0)[lt]{\lineheight{1.25}\smash{\begin{tabular}[t]{l} \end{tabular}}}}%
  \end{picture}%
\endgroup%

	\caption{An example of a pair of loops $e_1$ and $e_2$ with shared base point that are convex to disjoint regions.}
	\label{fig:net_example}
\end{figure}

	As a preliminary step, we recall the following topological lemma.
	 
	 \begin{lemma}
	 	\label{separating}
	 	 Let $\gamma$ be a curve in a surface $M$. If $\gamma$ does not separate $M$ into two disjoint regions, then it is homotopically non-trivial. \qed
	 \end{lemma}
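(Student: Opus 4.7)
The plan is to prove the contrapositive: if a simple closed curve $\gamma$ is null-homotopic in $M$, then $\gamma$ separates $M$. This is the form in which the lemma is used in the proof of Lemma \ref{min_closed_curve}, where $\sigma$ is a component of a smooth level set and hence a smoothly embedded simple closed curve, so there is no loss in restricting to this case.

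Assume then that $\gamma$ is smoothly embedded and null-homotopic, and suppose for contradiction that $M \setminus \gamma$ is connected. The goal is to construct a closed curve $\eta$ meeting $\gamma$ in exactly one transverse point. First I would pick a short arc $\alpha$ in a tubular neighbourhood of $\gamma$ that crosses $\gamma$ transversally at a single point $p$, with its two endpoints lying just off of $\gamma$ on opposite local sides. By connectedness of $M \setminus \gamma$, these two endpoints can be joined by a path $\beta \subset M \setminus \gamma$, and the concatenation $\eta := \alpha \cup \beta$ is then a closed curve meeting $\gamma$ transversally in the single point $p$.

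To derive the contradiction, extend the assumed null-homotopy to a continuous map $F : D^2 \to M$ with $F|_{\partial D^2} = \gamma$. By standard transversality arguments, $F$ can be smoothly approximated and then perturbed on the interior of $D^2$ (leaving the boundary behaviour, which is already transverse to $\eta$, unchanged) so as to be transverse to $\eta$. The preimage $F^{-1}(\eta)$ is then a compact embedded $1$-manifold in $D^2$ whose boundary, $\partial D^2 \cap F^{-1}(\eta) = \gamma \cap \eta$, consists of exactly the single point $p$. This contradicts the fact that a compact $1$-manifold has an even number of boundary points, so $\gamma$ must separate $M$.

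The main subtlety I expect is the transversality step used to modify $F$: while this is routine for smooth $F$, a genuine continuous extension requires a smooth approximation rel boundary before invoking transversality. Alternatively, one can phrase the entire argument in terms of mod-$2$ intersection numbers on $H_1(M;\mathbb{Z}/2)$: the null-homotopy assumption forces $[\gamma] = 0$, hence the mod-$2$ intersection pairing of $\gamma$ with any closed curve must vanish, directly contradicting $[\gamma] \cdot [\eta] = 1$ from our explicit construction. Either formulation is elementary and requires no input from the Riemannian structure of $M$.
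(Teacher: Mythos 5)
The paper does not prove Lemma \ref{separating}: the statement is closed immediately, signaling that it is cited as a standard fact of surface topology rather than argued. Your proposal correctly supplies a proof of that fact, via the standard route: a null-homotopic simple loop $\gamma$ bounds a map $F\colon D^2\to M$, and pulling back a transversely-met embedded circle $\eta$ produces a compact $1$-manifold in $D^2$ whose boundary is $\gamma\cap\eta$; that set therefore has even cardinality, contradicting the single transverse crossing you obtain from connectedness of $M\setminus\gamma$. Restricting to simple closed curves is harmless---the lemma is invoked in the paper only for the simple geodesic loops $\gamma_t$ of Lemma \ref{finite_area_loops_convexity}, and the parallel remark inside the proof of Lemma \ref{min_closed_curve} likewise concerns a simple level-set component. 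One detail worth making explicit: for the preimage argument you want $\eta$ to be an embedded circle so that $F^{-1}(\eta)$ is genuinely a $1$-manifold, which you can arrange by taking $\beta$ to be a simple arc in $M\setminus\gamma$ meeting $\alpha$ only at its endpoints. Your mod-$2$ homological reformulation is equally valid and sidesteps this bookkeeping, since the pairing on compact $1$-cycles vanishes on nullhomologous classes by the same chain-transversality count, with no orientability or compactness hypotheses on $M$ needed.
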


	Importantly, if a curve on $M$ is not separating, then we can shorten it to a closed geodesic (note that a non-separating curve cannot escape to infinity for topological reasons). We will now build upon the arguments of Croke in \cite{croke1988} and examine the convexity properties of short geodesic loops.

\begin{lemma}
	\label{finite_area_loops_convexity}
	Suppose $M$ is a complete, orientable surface with finite area $A$. Let $\tau$ be either a ray based at arbitrary $x$ or a line, and where possible define $\gamma_t$ as a shortest geodesic loop based at $\tau(t)$ that is essential in $M\setminus\{x\}$ (or $M$). Suppose $M$ contains no geodesics of length at most $\sqrt{2A}$. Then
	\begin{enumerate}
		\item 
		If $\tau$ is a ray, either there is some $t\geq\sqrt{A/2}$ with two choices of $\gamma_t$, one convex to $x$ and one convex to $\tau(\infty)$, or every $\gamma_t$ is convex to $\tau(\infty)$.
		\item  If $\tau$ is a line, there is some $t$ with two choices of $\gamma_t$, one convex to $\tau(-\infty)$ and one convex to $\tau(\infty)$.
	\end{enumerate}
\end{lemma}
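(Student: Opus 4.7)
My plan is to introduce, for each admissible parameter $t$, the sets $S^+$ and $S^-$ consisting of those $t$ for which some shortest loop $\gamma_t$ is convex to the $\tau(\infty)$-side, respectively to the $x$-side (or $\tau(-\infty)$-side in the line case). A closedness-plus-connectedness argument will then force $S^+ \cap S^-$ to be nonempty unless one of the sets already equals the entire parameter range, which yields the ``alternative'' case in part (1).

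The first step is to argue that under the hypothesis that $M$ contains no closed geodesic of length at most $\sqrt{2A}$, every shortest essential loop $\gamma_t$ produced by Lemma \ref{short_loops} must be separating. Otherwise, by Lemma \ref{separating} it is homotopically nontrivial, and applying the Birkhoff curve shortening process within its homotopy class (which cannot escape to infinity as remarked in the text) would produce a closed geodesic of length strictly less than $\sqrt{2A}$, contradicting the hypothesis. A separating $\gamma_t$ bounds two complementary regions, one containing $\tau(\infty)$ and the other containing $x$ (or $\tau(-\infty)$). Since the two interior angles at the vertex of $\gamma_t$ sum to $2\pi$, at least one is $\leq \pi$, so $\gamma_t$ is convex to at least one side; hence $S^+ \cup S^-$ covers the full parameter range.

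Next, I would show that $S^+$ and $S^-$ are both closed. Given $t_n \to t$ with $\gamma_{t_n}$ convex to $\tau(\infty)$, the loops $\gamma_{t_n}$ all have length at most $\sqrt{2A}$ and pass through points converging to $\tau(t)$, so by Arzel\`a--Ascoli a subsequence converges uniformly to a loop $\gamma$ at $\tau(t)$. The estimate $L_{t_n} \leq L_t + 2|t-t_n|$, obtained by joining a shortest loop at $\tau(t)$ to $\tau(t_n)$ via an arc of $\tau$, combined with the reverse bound in the limit, forces $\gamma$ to realize the infimal loop length at $\tau(t)$ and hence to be a valid $\gamma_t$. The angle inequality passes to the limit, so $t \in S^+$. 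Since the parameter range (either $\mathbb{R}$ or $[\sqrt{A/2},\infty)$) is connected, $S^+ \cap S^-$ is nonempty whenever both sets are. At any such $t$, either there are two distinct shortest loops, one convex to each side---yielding the first alternative---or a single loop is convex to both sides; but then both interior angles equal $\pi$, making the loop a smooth closed geodesic of length at most $\sqrt{2A}$, contradicting the hypothesis.

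What remains is to rule out $S^+=\emptyset$ in the ray setting (the case $S^-=\emptyset$ being precisely the second alternative) and to verify that neither $S^+$ nor $S^-$ is empty in the line setting. I expect this to be the main obstacle. The intuition is asymptotic: for $|t|$ large, $\tau(t)$ is very close to the end $\tau(\pm\infty)$, and the bounded-length loop $\gamma_t$ cuts off a region of small area containing the end; a variational argument at the vertex of the shortest loop (along the Berger-comparison lines mentioned earlier in the paper) should show that a length-minimizing loop must bend into the small region, i.e.\ be convex to the end. In the line case this yields $S^+\neq\emptyset$ by $t \to +\infty$ and $S^-\neq\emptyset$ by $t \to -\infty$, giving part (2); in the ray case it yields $S^+\neq\emptyset$, so the connectedness argument gives part (1) whenever $S^-$ is also nonempty.
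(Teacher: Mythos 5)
Your framing via the closed sets $S^{\pm}$, the observation that every $\gamma_t$ must be separating (else Lemma~\ref{separating} plus Birkhoff yields a short closed geodesic), the ``interior angles sum to $2\pi$, so at least one is $\leq\pi$'' argument for $S^+\cup S^-$ covering the parameter range, the closedness-via-Arzel\`a--Ascoli step, and the connectedness conclusion that $S^+\cap S^-\neq\emptyset$ or one set is empty, all match the paper's proof; likewise your remark that a loop convex to both sides would have two angles equal to $\pi$ and hence be a smooth closed geodesic, contradicting the hypothesis, is a correct (and necessary) observation that the paper leaves implicit.

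However, the step you flag as ``the main obstacle'' is a genuine gap, and the heuristic you propose to fill it does not work as stated. You claim that for $|t|$ large the far side of $\gamma_t$ has small area, and that ``a variational argument at the vertex\ldots should show that a length-minimizing loop must bend into the small region.'' But the area of the region cut off by $\gamma_t$ does not, by itself, constrain the vertex angle: a geodesic loop can bound a tiny region on its non-convex side. Moreover, $\gamma_t$ is minimal only among loops with the fixed base point $\tau(t)$, so the first-variation argument at the vertex controls how the optimal length changes as the base point moves along $\tau$, not the angle at a fixed $t$. The paper instead combines two specific ingredients that you never invoke: (i) the integral bound $\int_{\sqrt{A/2}}^{\infty} L(\gamma_t)\,dt \leq A < \infty$ (a co-area-type estimate made possible because the loops at distinct base points are disjoint, cf.\ Lemma~\ref{short_loops}), which forces $\liminf_{t\to\infty} L(\gamma_t)=0$; and (ii) the monotonicity observation that if $\gamma_{t_0}$ is convex to $x$ then cutting across the vertex produces a strictly shorter essential loop based at $\tau(t_0-\epsilon')$, so $L(\gamma_t)$ is (by continuity of $t\mapsto L(\gamma_t)$) non-decreasing if every $\gamma_t$ is convex to $x$. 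The contradiction between (i) and (ii) is what shows $S^+\neq\emptyset$, and the symmetric argument at $t\to-\infty$ gives $S^-\neq\emptyset$ in the line case. Without the integral bound you have no way to rule out $L(\gamma_t)$ staying bounded away from zero, and without the cutting-across-the-vertex monotonicity you have no link between convexity direction and the behaviour of $L$ in $t$; so your proof is incomplete at exactly this point.
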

\begin{proof}
	The existence of the curves $\gamma_t$ for $t\geq\sqrt{A/2}$ such that $L(\gamma_t)\leq\sqrt{2A}$ follows from Lemma \ref{short_loops}. Every $\gamma_t$ must separate $M$ into two regions, as otherwise by Lemma \ref{separating} some $\gamma_t$ could be shortened to a geodesic of length at most $\sqrt{2A}$. Thus each $\gamma_t$ is convex to one of the regions it bounds. In particular, $\gamma_t$ is either convex to a region containing $x$ (or $\tau(-\infty)$), or to a region containing $\tau(\infty)$.
	\par
	By the co-area formula, we have
	\begin{align*}
	\infty > A \geq \int_{\sqrt{A/2}}^\infty L(\gamma_t)dt,
	\end{align*}
	and so there is some sequence $\{t_i\}$ such that $L(\gamma_{t_i})\to 0$ as $t_i\to\infty$. If some $\gamma_{t_0}$ is convex to $x$ (or $\tau(-\infty)$), then, for small $\epsilon>0$, $L(\gamma_t)<L(\gamma_{t_0})$ for $t\in(t_0-\epsilon,t_0)$. This is because cutting across the vertex of $\gamma$ with a short minimizing geodesic segment gives rise to a strictly smaller curve intersecting $\tau(t_0-\epsilon')$ for some small $\epsilon'$. Therefore if every $\gamma_t$ is convex to $x$ (or $\tau(-\infty)$), $L(\gamma_t)$ is an increasing function of $t$. Since $L(\gamma)$ is positive, this contradicts the fact that $L(\gamma_t)\to0$, so there must exist at least one $\gamma_t$ convex to $\tau(\infty)$.
	\par 
	However, the set of $t$ such that $\gamma_t$ can be chosen convex to $\tau(\infty)$ is closed, as is the set of $t$ with $\gamma_t$ convex the other way (i.e., to $x$ or $\tau(-\infty)$). Since $[\sqrt{A/2},\infty)\subset \mathbb{R}$ is connected and the first set is non-empty, either the second set is empty or the sets intersect. Thus either every $\gamma_t$ is convex to $\tau(\infty)$ or there is some $t_0$ with two choices of $\gamma_{t_0}$, one convex to each end of $\tau$.
	\par 
	If $\tau$ is a line, then $\gamma_t$ is defined for all $t\in\mathbb{R}$ and hence
	\begin{align*}
	\infty > A  \geq \int_{-\infty}^{\infty} L(\gamma_t)dt,
	\end{align*}
	so we have the additional condition that there is some sequence $\{t_j\}$ such that $L(\gamma_{t_j})\to 0$ as $t_j\to-\infty$. Thus if some $\gamma_{t_0}$ is convex to $\tau(\infty)$, then $L(\gamma_t)<L(\gamma_{t_0})$ for $t\in(t_0,t_0+\epsilon)$. As before, this leads to a contradiction if every $\gamma_t$ is convex to $\tau(\infty)$. Therefore there must be some $\gamma_t$ convex to $\tau(-\infty)$ and hence some $t_0$ with two choices of $\gamma_{t_0}$, one convex to each direction.
\end{proof}

	We now prove that if $M$ does not have a short geodesic, then we can always find a pair of loops that share a base point but are convex in different directions, even if $M$ has only one end. To eliminate the possibility that every loop is convex to a single end, we will use a few facts about the space of integral 1-cycles, denoted $Z_1(M,\bZ)$. Lying within this space is $\Gamma(M)$, the set of all 1-cycles with only one or two connected components. If we can find a non-contractible loop within $\Gamma(M)$, then the maximal length of a 1-cycle in the loop is an upper bound for the length of a shortest closed geodesic on $M$. For further details, see \cite{calabi1992}. We now describe how to produce such a loop in the one-ended case.

\begin{lemma}[The one-ended case]
	\label{1_end_core}
	Suppose $M$ is a complete, orientable surface of finite area $A$ that has one end. If $M$ contains no closed geodesics of length at most $4\sqrt{2A}$, then it contains a pair of geodesic loops with a common vertex such that the two loops are convex to disjoint regions and each has length at most $\sqrt{2A}$.
\end{lemma}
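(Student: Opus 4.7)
The plan is to apply Lemma \ref{finite_area_loops_convexity} to a minimizing ray $\tau$ based at an arbitrary point $x\in M$. Its first alternative immediately supplies the desired pair: at some $t \geq \sqrt{A/2}$ there exist two shortest essential loops based at $\tau(t)$, one convex to $x$ and the other convex to $\tau(\infty)$; Lemma \ref{short_loops} bounds each of their lengths by $\sqrt{2A}$, and they share the vertex $\tau(t)$ while bounding disjoint convex regions. So the substance of the proof lies in ruling out the second alternative, in which every shortest essential geodesic loop $\gamma_t$ based at $\tau(t)$ is convex to $\tau(\infty)$, under the standing assumption that $M$ admits no closed geodesic of length at most $4\sqrt{2A}$.

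In that case I would derive a contradiction by building a non-contractible loop in $\Gamma(M)$ whose maximum cycle-length is at most $4\sqrt{2A}$; by the min-max principle recalled from \cite{calabi1992}, such a loop forces a closed geodesic of length at most $4\sqrt{2A}$. The loop is assembled as a sweepout of the topological sphere $\bar M$ obtained by compactifying the end of $M$ to a point $p_\infty$. Set $\gamma := \gamma_{\sqrt{A/2}}$, which by hypothesis has length at most $\sqrt{2A}$ and bounds a disk $D\subset M$ containing $x$, with the complementary convex disk containing $p_\infty$. For the outer half of the sweepout I would take the family $\{\gamma_t\}_{t\geq\sqrt{A/2}}$ provided by Lemma \ref{short_loops}: these are simple loops of length at most $\sqrt{2A}$ and, by the coarea estimate $\int_{\sqrt{A/2}}^\infty L(\gamma_t)\,dt \leq A$, they realize a deformation of $\gamma$ into arbitrarily short loops at $p_\infty$. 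The inner half must sweep $D$ from $\gamma$ down to a point at $x$; for this I would adapt the sphere sweepout of \cite{nabutovsky2002, rotman2006, sabourau2004} to the disk $D$, using the distance function from $x$ together with surgery along short subarcs near singular level sets to produce a continuous family of $1$- or $2$-component cycles (hence in $\Gamma(M)$) with maximum length controlled by $\sqrt{A}$ and $L(\gamma)$. Concatenating the two halves yields the sweepout.

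The main obstacle is twofold. First, the inner sweepout must be constructed with a sharp enough width bound that the concatenation stays within $4\sqrt{2A}$; this is delicate because $\gamma$ is convex \emph{away} from $D$ by the case hypothesis, so Lemma \ref{convex_traps_loops} does not confine length-reducing homotopies of $\gamma$ to $D$, and one genuinely needs the extra flexibility of cycles with two components that $\Gamma(M)$ provides. Second, one must verify that the min-max critical sequence does not escape to $p_\infty$: a sequence of almost-minimizing cycles concentrating in a neighbourhood of the end would, thanks to the assumed convexity of every short loop to $\tau(\infty)$ together with the finiteness of area, already be forced to converge to a genuine closed geodesic of length at most $\sqrt{2A}$ inside $M$, again contradicting the standing assumption.
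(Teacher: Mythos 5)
Your reduction to ruling out the ``every $\gamma_t$ is convex to infinity'' alternative via Lemma~\ref{finite_area_loops_convexity}, and your overall strategy of producing a non-contractible loop in $\Gamma(\hat M)$ of width $\leq 4\sqrt{2A}$ on the compactified surface, match the paper's outline. But the heart of the lemma is precisely the construction of that sweepout, and here your proposal has a genuine gap: you defer the inner half of the sweepout to an ``adaptation'' of the sphere arguments of \cite{nabutovsky2002,rotman2006,sabourau2004} to the region $D$ bounded by $\gamma$, while simultaneously (and correctly) observing that $\gamma$ is convex \emph{away} from $D$, so the Birkhoff flow cannot be trapped inside $D$ and the width bound is not routine. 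You have identified the obstacle but not overcome it, and the advertised bound $4\sqrt{2A}$ is a sharp target for which a hand-waved ``surgery along short subarcs'' sketch does not suffice. There is also an unjustified topological assumption: you take for granted that $\gamma$ bounds a disk $D$, but a one-ended surface can have genus, and a priori the precompact component of $M\setminus\gamma$ could carry a handle; the paper has to \emph{derive} that $M$ is a plane before it may compactify to a sphere.

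The paper's proof resolves both problems with one device you are missing: a non-compact Berger lemma applied at the point $y$ farthest from the base of a suitably chosen short loop $\gamma$ (based at $\tau(6\sqrt{A/2})$). This yields a tripod of minimizing geodesic segments $\tau_1,\tau_2,\tau_3$ emanating from $y$ with consecutive angles $\leq\pi$, all of which cross a second short loop $\eta$ around the end. Cutting $\eta$ by the tripod produces three closed curves $T_1,T_2,T_3$, each convex to the precompact region it bounds, so Lemma~\ref{convex_traps_loops} confines their Birkhoff shortenings and each collapses to a point; $\eta$ itself shortens toward infinity. That these four shortenings tile $M$ is what proves $M$ is a plane, and gluing the resulting four disks in $\hat M$ gives a degree-one $2$-cycle, so the associated loop in $\Gamma(\hat M)$ is non-contractible by Almgren's isomorphism. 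The width of the explicit five-step homotopy is controlled by $L(T_1\cup T_2)\leq 4\sqrt{2A}$. Your proposal shares the min-max skeleton but lacks the Berger-lemma tripod, which is exactly what supplies the convexity needed to trap the shortenings, the topological control on $M$, and the explicit length bound.
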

\begin{proof}
	\begin{figure}[h]
		\centering
		\def\svgwidth{0.49\textwidth}
		\raisebox{-0.5\height}{%% Creator: Inkscape inkscape 0.92.3, www.inkscape.org
%% PDF/EPS/PS + LaTeX output extension by Johan Engelen, 2010
%% Accompanies image file '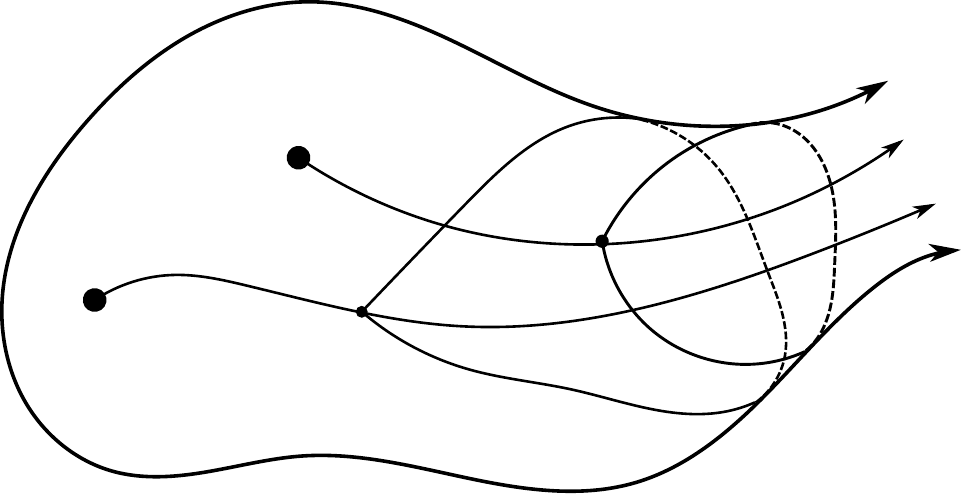' (pdf, eps, ps)
%%
%% To include the image in your LaTeX document, write
%%   \input{<filename>.pdf_tex}
%%  instead of
%%   \includegraphics{<filename>.pdf}
%% To scale the image, write
%%   \def\svgwidth{<desired width>}
%%   \input{<filename>.pdf_tex}
%%  instead of
%%   \includegraphics[width=<desired width>]{<filename>.pdf}
%%
%% Images with a different path to the parent latex file can
%% be accessed with the `import' package (which may need to be
%% installed) using
%%   \usepackage{import}
%% in the preamble, and then including the image with
%%   \import{<path to file>}{<filename>.pdf_tex}
%% Alternatively, one can specify
%%   \graphicspath{{<path to file>/}}
%% 
%% For more information, please see info/svg-inkscape on CTAN:
%%   http://tug.ctan.org/tex-archive/info/svg-inkscape
%%
\begingroup%
  \makeatletter%
  \providecommand\color[2][]{%
    \errmessage{(Inkscape) Color is used for the text in Inkscape, but the package 'color.sty' is not loaded}%
    \renewcommand\color[2][]{}%
  }%
  \providecommand\transparent[1]{%
    \errmessage{(Inkscape) Transparency is used (non-zero) for the text in Inkscape, but the package 'transparent.sty' is not loaded}%
    \renewcommand\transparent[1]{}%
  }%
  \providecommand\rotatebox[2]{#2}%
  \newcommand*\fsize{\dimexpr\f@size pt\relax}%
  \newcommand*\lineheight[1]{\fontsize{\fsize}{#1\fsize}\selectfont}%
  \ifx\svgwidth\undefined%
    \setlength{\unitlength}{276.72490893bp}%
    \ifx\svgscale\undefined%
      \relax%
    \else%
      \setlength{\unitlength}{\unitlength * \real{\svgscale}}%
    \fi%
  \else%
    \setlength{\unitlength}{\svgwidth}%
  \fi%
  \global\let\svgwidth\undefined%
  \global\let\svgscale\undefined%
  \makeatother%
  \begin{picture}(1,0.51315332)%
    \lineheight{1}%
    \setlength\tabcolsep{0pt}%
    \put(0,0){\includegraphics[width=\unitlength,page=1]{one_end_construction_tex.pdf}}%
    \put(0.03160502,0.21335081){\color[rgb]{0,0,0}\makebox(0,0)[lt]{\lineheight{1.25}\smash{\begin{tabular}[t]{l}$y$\end{tabular}}}}%
    \put(0.2752682,0.36803591){\color[rgb]{0,0,0}\makebox(0,0)[lt]{\lineheight{1.25}\smash{\begin{tabular}[t]{l}$x$\end{tabular}}}}%
    \put(0.62463172,0.3178364){\color[rgb]{0,0,0}\makebox(0,0)[lt]{\lineheight{1.25}\smash{\begin{tabular}[t]{l}$\gamma$\end{tabular}}}}%
    \put(0.37177532,0.31835479){\color[rgb]{0,0,0}\makebox(0,0)[lt]{\lineheight{1.25}\smash{\begin{tabular}[t]{l}$\tau$\end{tabular}}}}%
    \put(0.16972579,0.23959459){\color[rgb]{0,0,0}\makebox(0,0)[lt]{\lineheight{1.25}\smash{\begin{tabular}[t]{l}$\sigma$\end{tabular}}}}%
    \put(0.51824696,0.07509226){\color[rgb]{0,0,0}\makebox(0,0)[lt]{\lineheight{1.25}\smash{\begin{tabular}[t]{l}$\eta$\end{tabular}}}}%
    \put(0.12742765,0.07098576){\color[rgb]{0,0,0}\makebox(0,0)[lt]{\lineheight{1.25}\smash{\begin{tabular}[t]{l}$B$\end{tabular}}}}%
  \end{picture}%
\endgroup%
}
		\def\svgwidth{0.49\textwidth}
		\raisebox{-0.5\height}{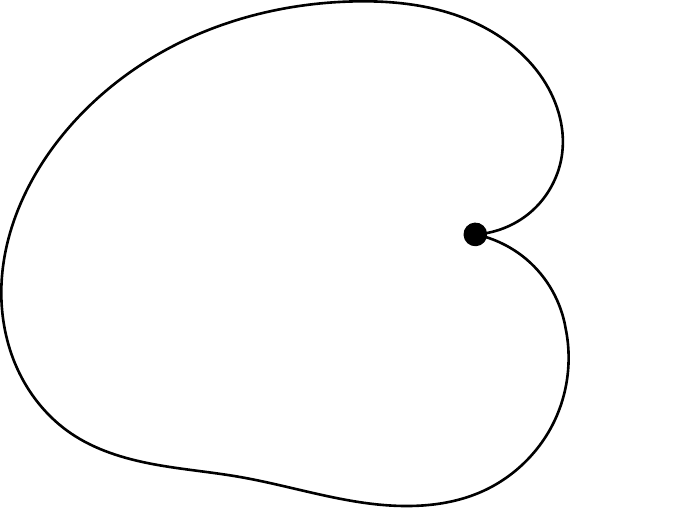}
		\caption{Left: A one-ended surface with every short loop convex to infinity.
				 Right: The geodesic segments constructed via Berger's Lemma.}
		\label{fig:one_end_construction}
	\end{figure}
	Suppose $M$ has no closed geodesics of length at most $4\sqrt{2A}$. Fix any $x\in M$ and let $\tau$ be a ray based at $x$. Define $\gamma_t$ for each $t\geq\sqrt{A/2}$ as above. We can apply Lemma \ref{finite_area_loops_convexity}, obtaining the following two cases.
	The first case is that there is some $t_0\geq\sqrt{A/2}$ with two choices of $\gamma_{t_0}$, one convex to each direction. This is our desired loop pair.
	The alternative case is that for every choice of $\tau$ and all $t\geq\sqrt{A/2}$, every $\gamma_t$ is convex to infinity. We will show that the latter case never actually occurs if $M$ contains no geodesics of length at most $4\sqrt{2A}$. 
	\par 
	First, we apply the construction given in \cite{croke1988} (see Figure \ref{fig:one_end_construction}). Define $\gamma$ as a shortest geodesic loop based at $\tau(6\sqrt{A/2})$. This curve bounds a precompact region $\Omega_\gamma$. Let $y$ be at maximum distance from $\gamma(0)$ out of all points in $\Omega_\gamma$. Let $\sigma$ be a ray from $y$ to infinity and let $\eta$ be a short loop at $\sigma(\sqrt{A/2})$. Notice that $\Omega_\gamma$ strictly contains the precompact region $\Omega_\eta$ bounded by $\eta$, as
	\begin{align*}
	d(y,\gamma)\geq d(y,\gamma(0)) -L(\gamma)/2\geq d(x,\gamma(0)) -L(\gamma)/2\geq5\sqrt{A/2}.
	\end{align*}
	\par 
	 Because $y$ is at locally maximal distance from $\gamma(0)$ (recall that $y$ lies within the interior of $\Omega_{\eta}$), we can apply a non-compact version of Berger's lemma (cf. Lemma 8.15 in \cite{comparison}). Let $\tau_1=\sigma\mid_{[0,\sqrt{A/2}]}$. Then by Berger's Lemma there are minimizing geodesics segments $\tau_2$ and $\tau_3$ from $y$ to $\gamma(0)$ such that the angle between $\tau_i$ and $\tau_{i+1}$ (denoted cyclically) at $y$ is at most $\pi$. Note that it may be the case that $\tau_3=\tau_1$ if $\tau_1$ and $\tau_2$ are segments of the same geodesic. \par 
	 Since $\gamma$ lies outside of $\Omega_{\eta}$, each $\tau_i$ intersects $\eta$. Let $\eta(t_i)=\tau_i\cap\eta$, and define the closed curves $T_{1}=\tau_1\cup\eta\mid_{[0,t_2]}\cup-\tau_2$,  $T_{2}=\tau_2\cup\eta\mid_{[t_2,t_3]}\cup-\tau_3$ and  $T_{3}=\tau_3\cup\eta\mid_{[t_3,L(\eta)]}\cup-\tau_1$.
	 Notice that each $T_i$ is convex to the precompact region $\Omega_i$ it bounds. Therefore by Lemma \ref{convex_traps_loops} and our assumption that $M$ has no geodesics of length less than $4\sqrt{2A}$, each $T_i$ shortens to a point curve $q_i\in \Omega_i$. Similarly, $\eta$ must shorten to a point at infinity. Therefore the neighbourhood of infinity bounded by $\eta$ is, topologically, a half-cylinder, as the presence of a handle would obstruct the shortening of $\eta$. Therefore $M$ is homeomorphic to the plane, since it is covered by the regions induced by shortening $\eta$, $T_1$, $T_2$, and $T_3$. Thus we can compactify $M$ by adding a point at infinity to produce a topological sphere, denoted $\hat{M}$.
	\par 
	Recall that $\Gamma(\hat{M})$ is the set of all 1-cycles in $\hat{M}$ with only one or two connected components. We now define the following homotopy in $\Gamma(\hat{M})$:
	\begin{enumerate}
		\item $(\{q_1\}\cup\{q_2\})\sim (T_{1}\cup T_{2})$ by reversing the curve shortening process.
		\item $(T_{1}\cup T_{2})\sim (\tau_1\cup\eta\mid_{[0,t_3]}\cup-\tau_3)$ by retracting  $-\tau_2\cup\tau_2$ to its base point $\eta(t_2)$ along itself.
		\item $(\tau_1\cup\eta\mid_{[0,t_3]}\cup-\tau_3)\sim (\tau_1\cup\eta\cup-\eta\mid_{[t_3,L(\eta)]}\cup -\tau_3)=(\eta\cup -T_3)$ by extending $\eta\mid_{[t_3,L(\eta)]}\cup- \eta\mid_{[t_3,L(\eta)]}$ from its base point $\eta(t_3)$ along itself.
		\item  $(\eta\cup -T_3) \sim (\{\infty\}\cup\{q_3\})$ by the curve shortening homotopy.
		\item $(\{\infty\}\cup\{q_3\})\sim(\{q_1\}\cup\{q_2\})$
	\end{enumerate} 
	This homotopy produces a closed loop $\Phi$ in $\Gamma(\hat{M})\subset Z_1(\hat{M},\bZ)$. Notice that the longest cycle in this homotopy (e.g., $T_{1}\cup T_{2}$) has length at most $4\sqrt{2A}$. Thus if our loop in $Z_1(\hat{M},\bZ)$ is not contractible, $M$ contains a closed geodesic of length at most $4\sqrt{2A}$ (cf. \cite{calabi1992, nabutovsky2004}). By the work of Almgren \cite{almgren1962}, we know that $Z_1(\hat{M},\bZ)\simeq H_2(\hat{M},\bZ)=\bZ$, so it is enough to check the that homology class corresponding to $\Phi$ is non-zero. The class corresponding to this homotopy under Almgren's isomorphism is represented by the 2-cycle determined by the union of the disks generated by shortening $T_1,T_2,T_3$ and $\eta$ in $\hat{M}$. These disks naturally glue together to produce a 2-sphere, and hence this cycle represents a non-trivial element of $H_2(\hat{M},\bZ)\simeq H_2(S^2,\bZ)$. Therefore the corresponding path in $Z_1(\hat{M},\bZ)$ cannot be trivial, either. For further details, see the analogous final proof in \cite{nabutovsky2002}. Thus we obtain a closed geodesic of length at most $4\sqrt{2A}$, contradicting our initial assumption. 
\end{proof}

\begin{lemma}[The two-ended case]
	\label{2_ends_core}
	Suppose $M$ is a complete, orientable surface of finite area $A$ that has at least two ends. If $M$ contains no closed geodesics of length at most $\sqrt{2A}$, then it contains a pair of geodesic loops with a common vertex such that the two loops are convex to disjoint regions and each has length at most $\sqrt{2A}$.
\end{lemma}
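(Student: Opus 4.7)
The plan is to reduce the statement almost directly to Lemma \ref{finite_area_loops_convexity}. First I would produce a minimizing geodesic line $\tau:\mathbb{R}\to M$, which is available because $M$ has at least two ends: pick sequences of points $\{p_i\},\{q_i\}$ escaping to two distinct ends, join each pair by a minimizing segment, and extract a subsequential limit via completeness of $M$. This is exactly the existence assertion cited in the paragraph preceding Lemma \ref{short_loops}.

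Next, I would apply Case 2 of Lemma \ref{finite_area_loops_convexity} to this line $\tau$; the hypothesis of that lemma, namely that $M$ contains no closed geodesics of length at most $\sqrt{2A}$, is precisely our standing assumption. This yields some $t_0\in\mathbb{R}$ and two shortest essential geodesic loops $\gamma_-$ and $\gamma_+$ based at the common vertex $\tau(t_0)$, with $\gamma_-$ convex to a region $\Omega_-$ containing $\tau(-\infty)$ and $\gamma_+$ convex to a region $\Omega_+$ containing $\tau(\infty)$. Both loops have length at most $\sqrt{2A}$ by Lemma \ref{short_loops}, matching the length bound required in the conclusion. The one remaining point is to verify that $\Omega_-$ and $\Omega_+$ are genuinely disjoint, not merely that they contain different ends of $\tau$.

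For the disjointness I would invoke the final sentence of Lemma \ref{short_loops}: two distinct shortest essential loops at the same base point intersect only at that base point. Hence $\gamma_+\setminus\{\tau(t_0)\}$ is connected and disjoint from $\gamma_-$, so it lies in exactly one of the two open components of $M\setminus\gamma_-$. Since $\gamma_-$ meets $\tau$ only at $\tau(t_0)$ and separates the two ends of $\tau$, the point $\tau(\infty)$ lies outside $\overline{\Omega_-}$; but $\tau(\infty)\in\Omega_+$, so $\Omega_+\not\subset\Omega_-$, which forces $\gamma_+\setminus\{\tau(t_0)\}\subset M\setminus\overline{\Omega_-}$ and consequently $\Omega_+\cap\Omega_-=\emptyset$. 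I do not expect a substantive obstacle here, since the hard work, namely the convexity dichotomy derived from the co-area integral $\int L(\gamma_t)\,dt\le A$ and the Birkhoff-shortening monotonicity argument, has already been absorbed into Lemma \ref{finite_area_loops_convexity}; what remains is only the short topological separation check described above.
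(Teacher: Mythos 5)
Your overall strategy matches the paper's: produce a minimizing geodesic line, invoke Case 2 of Lemma \ref{finite_area_loops_convexity} to obtain two shortest essential loops with a common vertex $\tau(t_0)$, one convex toward each end, and cite Lemma \ref{short_loops} for the length bound and for the fact that distinct shortest essential loops at $\tau(t_0)$ meet only there. The paper's own proof is exactly this and simply declares the result without verifying that the two convex regions are disjoint, so your instinct to check that point is a good one.

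However, the disjointness argument as you wrote it has a genuine gap. You claim that ``$\Omega_+\not\subset\Omega_-$ forces $\gamma_+\setminus\{\tau(t_0)\}\subset M\setminus\overline{\Omega_-}$.'' This implication does not hold: if $\gamma_+\setminus\{\tau(t_0)\}$ lay inside $\Omega_-$, the complementary component $M\setminus\overline{\Omega_-}$, being connected and disjoint from $\gamma_+$, would sit entirely inside one component of $M\setminus\gamma_+$, namely the one containing $\tau(\infty)$, i.e.\ $\Omega_+$. Then $\Omega_+$ would contain $M\setminus\overline{\Omega_-}$ \emph{and} the annular collar of $\Omega_-$ between $\gamma_+$ and $\gamma_-$, so $\Omega_+\not\subset\Omega_-$ would be \emph{consistent} with $\gamma_+\setminus\{\tau(t_0)\}\subset\Omega_-$ and with $\Omega_+\cap\Omega_-\neq\emptyset$. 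So ``$\Omega_+\not\subset\Omega_-$'' is too weak to rule out the bad configuration. The same issue appears at the next step: even granting $\gamma_+\setminus\{\tau(t_0)\}\subset M\setminus\overline{\Omega_-}$, you need an argument that $\Omega_-$ lands in $\Omega_+^c$ rather than in $\Omega_+$, and ``consequently'' elides that.

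What actually closes both gaps is the convexity angle condition, which is available and which your argument never invokes. Since $M$ has no closed geodesics of length at most $\sqrt{2A}$, neither $\gamma_-$ nor $\gamma_+$ is a closed geodesic, so the inward angle of each $\Omega_\pm$ at the vertex is strictly less than $\pi$. Because each loop meets $\tau$ only at $\tau(t_0)$, the direction $-\tau'(t_0)$ lies in the interior of the $\Omega_-$-sector and $\tau'(t_0)$ in the interior of the $\Omega_+$-sector. If both tangents of $\gamma_+$ at $\tau(t_0)$ lay in the $\Omega_-$-sector, the arc between them passing through $\tau'(t_0)$ (which must be the $\Omega_+$-sector) would have angle greater than $\pi$, a contradiction; so $\gamma_+\setminus\{\tau(t_0)\}$ lies on the $\tau(\infty)$ side of $\gamma_-$. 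Then $\Omega_-$ is disjoint from $\gamma_+$ and hence contained in one component of $M\setminus\gamma_+$; if that component were $\Omega_+$, the $\Omega_+$-sector would contain both $\tau'(t_0)$ and $-\tau'(t_0)$, again forcing its angle to be at least $\pi$, a contradiction. Hence $\Omega_-\subset\Omega_+^c$ and the regions are disjoint. Your proof would be complete with this replacement; as written, the key deduction is a non sequitur.
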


\begin{proof}
	Suppose $M$ contains no closed geodesics of length at most $\sqrt{2A}$. Let $\tau$ be a line in $M$. Then by Lemma \ref{finite_area_loops_convexity} there must be a point $t_0$ such that there are two shortest geodesic loops with vertex at $\tau(t_0)$, each convex to a different end of $M$. This is our desired pair of loops.
\end{proof}

\subsection{Shortening Pairs of Loops} 
	\label{secnet_shortening}
	
	We now define a curve shortening technique that will, if successfully applied to our loops with shared vertex, induce a deformation retraction of the sphere to a point. In particular, we will show that if $M$ has no short closed geodesics, then it must be homeomorphic to a punctured sphere. Our process then gives rise to a retraction of the compactification of $M$ (which will be a sphere) to a point curve derived from shortening our loops. This is an impossibility, as the sphere is not contractible. The resultant contradiction will prove the existence of a short closed geodesic on $M$ whose length is bounded by the sum of the lengths of the original loops. 
	
\begin{lemma}
	\label{core-filling}
	Suppose $M$ is a complete surface with finite area $A$. If $M$ contains a pair of parametrized geodesic loops $e_1,e_2$ with a common vertex such that the two loops are convex to disjoint regions, then $M$ also contains a closed geodesic of length at most $L(e_1)+L(e_2)$.
\end{lemma}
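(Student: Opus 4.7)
The plan is to argue by contradiction: assume $M$ contains no closed geodesic of length at most $L(e_1)+L(e_2)$, and derive a contradiction by constructing a non-trivial element of $H_2(\hat M,\bZ)$ whose corresponding sweepout of $1$-cycles (via Almgren's isomorphism) consists entirely of cycles of length at most $L(e_1)+L(e_2)$, so that min-max produces a closed geodesic of that length.

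First, I would apply the Birkhoff curve shortening process separately to each $e_i$ inside its convex region $\overline{\Omega_i}$. By Lemma \ref{convex_traps_loops} each flow stays in $\overline{\Omega_i}$; since $\overline{\Omega_i}$ has finite area and (by the contradiction hypothesis) contains no closed geodesic of length at most $L(e_i)$, the flow must terminate either in a point $q_i\in\overline{\Omega_i}$ or along an end of $M$ contained in $\overline{\Omega_i}$. In the end-compactification $\hat M$ (adding one point per end), the flow traces out a (possibly singular) disk $D_i$ with boundary $e_i$ and apex $q_i$, showing that $\overline{\Omega_i}$, with its end attached if any, is a topological disk in $\hat M$.

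Next, I would analyse the complementary region $\Omega_3 = M\setminus\overline{\Omega_1\cup\Omega_2}$. Any end of $M$ sitting inside $\overline{\Omega_3}$ would, by Lemma \ref{short_loops}, be encircled by a short convex geodesic loop of length at most $\sqrt{2A}$, which by the same shortening argument must either shrink to a point or pass through a further end; iterating, this forces $\overline{\Omega_3}$ in $\hat M$ to be a topological disk bounded by the figure-eight $e_1\cup e_2$. Hence $\hat M$ decomposes as the union of three disks $D_1\cup D_2\cup\overline{\Omega_3}$ glued along $e_1\cup e_2$, so $\hat M\simeq S^2$ and the $2$-cycle $D_1+D_2-\overline{\Omega_3}$ represents the fundamental class in $H_2(\hat M,\bZ)\simeq\bZ$.

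Finally, via Almgren's isomorphism $H_2(\hat M,\bZ)\simeq\pi_1(Z_1(\hat M,\bZ))$, this non-trivial class is realised by a non-contractible loop of $1$-cycles in $Z_1(\hat M,\bZ)$: sweep the figure-eight inward through $D_1\cup D_2$ (shrinking $e_1$ via $D_1$, then $e_2$ via $D_2$, keeping the total cycle length at most $L(e_1)+L(e_2)$), and then back out to the figure-eight through $\overline{\Omega_3}$. The min-max theorem (cf.\ \cite{calabi1992, nabutovsky2004}) then yields a closed geodesic of length bounded by the supremum of $1$-cycle lengths in this loop, contradicting the standing assumption. The main obstacle will be the backward sweep through $\overline{\Omega_3}$: one must exhibit, using only the convex-to-disjoint-regions hypothesis at the shared vertex $p$ and the absence of short closed geodesics, an explicit sweepout of $\overline{\Omega_3}$ whose $1$-cycles have length at most $L(e_1)+L(e_2)$. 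This is where the fact that $e_1\cup e_2$ is convex towards $\overline{\Omega_1\cup\Omega_2}$ at $p$ (and hence the complementary angle in $\overline{\Omega_3}$ is controlled) will have to be fully used.
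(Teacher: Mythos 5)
Your proposal is genuinely different in spirit from the paper's proof. You reduce to a min--max argument on $Z_1(\hat M,\bZ)$ via Almgren's isomorphism (the same machinery the paper uses in the one-ended Lemma \ref{1_end_core}), whereas the paper's proof of Lemma \ref{core-filling} avoids Almgren entirely: it runs Birkhoff shortening on the \emph{figure-eight} $e_1\cup e_2$ to produce a one-parameter family of loop pairs $\{e_{1,t},e_{2,t}\}$, fills each triple $e_{1,t},e_{2,t},e_{3,t}$ by three shortening disks/half-cylinders to obtain a sphere $S_t$, quotients by an auxiliary locally convex set $V$ to control ends, and derives a contradiction from the impossibility of continuously deforming $M_0\simeq S^2$ to the point $M_{t_f}$. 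The paper's approach is more robust precisely because it moves the base point, and because the collapse-to-$V$ step handles curves escaping to infinity in a uniform way; your passage to the end-compactification $\hat M$ would need to be made compatible with the fact that the Birkhoff flow of a loop can leave every compact set.

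The main concrete gap is the one you flag, and it is real: you never say how to sweep out $\overline{\Omega_3}$ by $1$-cycles of length at most $L(e_1)+L(e_2)$. The missing ingredient is that $e_3 := e_1\cup(-e_2)$ (smoothed to a simple closed curve) is itself a curve of length $L(e_1)+L(e_2)$ that is \emph{convex to} $\Omega_3$, so Birkhoff shortening applied to $e_3$ inside $\Omega_3$ gives exactly the sweepout you need, trapped in $\overline{\Omega_3}$ by Lemma \ref{convex_traps_loops}. (That $e_3$ is convex to $\Omega_3$ is not a formal consequence of $e_1$ and $e_2$ each being convex; it uses that, in the paper's applications, the ray/line $\tau$ passes through the common vertex with $\tau'(t_0)$ interior to the sector in $\Omega_2$ and $-\tau'(t_0)$ interior to the sector in $\Omega_1$, forcing both complementary angles in $\Omega_3$ to be less than $\pi$.) Once you have this shortening, your assertion that $\overline{\Omega_3}$ is a disk in $\hat M$ also becomes correct for the right reason: a handle in $\Omega_3$ would obstruct the shortening of $e_3$ and yield a closed geodesic of length at most $L(e_1)+L(e_2)$, contradiction. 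Your current ``iterate over ends'' sketch does not rule out genus.

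A second, smaller issue is orientation bookkeeping in the Almgren loop. The figure-eight $e_1\cup e_2$ is the $1$-cycle $e_1+e_2$, while the curve filling $\Omega_3$ is $e_3=e_1-e_2=\pm\partial\Omega_3$; these are not equal as integral $1$-cycles, so a loop that goes ``inward through $D_1\cup D_2$ and back out through $\overline{\Omega_3}$ to the figure-eight'' does not literally close up in $Z_1(\hat M,\bZ)$. The correct closed loop is something like $0\to e_1\to e_1-e_2\to 0$, sweeping $\Omega_1$, then $\Omega_2$, then $\Omega_3$, whose total swept $2$-chain is the fundamental class. With that correction, and with the $e_3$-shortening supplied, your min--max route would work and would essentially parallel Lemma \ref{1_end_core}; the paper instead chose the $S_t$-contraction argument, which sidesteps both the orientation bookkeeping and the explicit Almgren computation.
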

\begin{proof}
	Suppose that $M$ has two loops $e_1$ and $e_2$ as described in the statement of the lemma but contains no closed geodesics of length at most $L(e_1)+L(e_2)$. We will derive a contradiction. As a preliminary step, orient the parametrized curves $e_1$ and $e_2$ such that $e_1\cup e_2$ can be parametrized as a single transversely self-intersecting loop. Note that, by assumption, $e_1$ and $e_2$ intersect only at their shared vertex. Define the loop $e_3=e_1\cup -e_2$, such that $e_3$ (unlike $e_1\cup e_2$) can be transformed into a simple curve by a short homotopy. For each $i$ let $\Omega_i$ be the region to which $e_i$ is convex. Note that $\Omega_3=(\Omega_1\cup\Omega_2)^c$.
	\par 
	First, we show that our loops lie in the complement of a locally convex set $V$ (i.e., each connected component of $V$ is convex). This fact will allow us to ``quotient out'' the ends of $M$ during our shortening algorithm. Each $\Omega_i$ must be a disk or half-cylinder, as otherwise $e_i$ could be shortened to a closed geodesic of length at most $L(e_i)$. If $\Omega_i$ is a half-cylinder, then let $\tau_i$ be a ray from the vertex of $e_i$ to infinity in $\Omega_i$. Choose a shortest loop $\gamma_i$ at $\tau(t)$ for $t$ large enough that $\gamma_i$ and $e_i$ do not intersect. If $\gamma_i$ was not convex to $\tau_i(\infty)$, then the region bounded by $\gamma_i$ and $e_i$ would be an annulus with convex boundary and hence would contain a short closed geodesic by Lemma \ref{convex_traps_loops} (e.g., by shortening $\gamma_i$). Thus $\gamma_i$ must be convex to $\tau_i(\infty)$. Apply this process for $i\in\{1,2,3\}$ whenever $\Omega_i$ is a half-cylinder. If $M$ is compact, then every $\Omega_i$ is a disk and we can take $V=\emptyset$. Otherwise, the region bounded by the (non-empty) union of the $\gamma_i$ contains our loops and is the complement of a locally convex set $V$.
	\par
	We will now show that we can apply an iterative curve shortening procedure to our loops to construct a continuous family $\{M_t\}$ of topological spheres terminating in a point. This contradicts the fact that the sphere is not contractible, proving that $M$ does in fact contain a short closed geodesic.
	\par
	First, we apply the curve shortening process to the loop $e_1\cup e_2$. At each $t>0$, the curve at time $t$ in the shortening homotopy can still be viewed as two (possible non-simple or constant) loops $e_{1,t}$ and $e_{2,t}$ with a common base point, which can be chosen continuously in time. Since $M$ contains no short geodesics, $e_{1,t}\cup e_{2,t}$ either converges to a point or escapes to infinity. In the first case, let $t_f$ be the first time at which $e_{1,t}\cup e_{2,t}$ is a point. In the second case, Lemma \ref{convex_traps_loops} and the convexity of $V$ together imply that we can define some $t_f$ such that $e_{1,t}\cup e_{2,t} \subset V$ for all $t\geq t_f$.
	\par 
	We construct the family $\{M_t\}$ in the following way (see Figure \ref{fig:net_shortening}).
	\begin{enumerate}
		\item 
		As discussed above, apply the curve shortening process to the self-intersecting curve $e_{1}\cup e_{2}$ to obtain a continuous family of loop pairs $\{e_{1,t},e_{2,t}\}$ with shared base point for $t\in[0,t_f]$. As before, let $e_{3,t}$ be the concatenation $e_{1,t}\cup - e_{2,t}$, so that its self-intersection at the loops' base point can be removed by a short homotopy.
		\item 
		For each $t\in[0,t_f]$, apply the curve shortening process to the three loops $e_{i,t}$. Since $M$ contains no short geodesics, for every $t\geq0$ each $e_{i,t}$ shortens to a point or escapes to infinity. The homotopy produced by the curve shortening process generates a surface $\Sigma_{i,t}$ bounded by the loop $e_{i,t}$, which is either a half-sphere, a half-cylinder or a point.
		\item 
		Glue the three surfaces $\Sigma_{i,t}$ together along their shared boundary $e_{1,t}\cup e_{2,t}$. This produces a (possibly multiply-punctured) sphere $S_t$.
		\item 
		Let $\sim$ be the equivalence relation defined on $M$ such that $x\sim y$ exactly when $x$ and $y$ are in the same connected component of $V$. This relation collapses each component of $V$ to a single point. We define $M_t = S_t/\mathord\sim$. 
	\end{enumerate}

	\begin{figure}[h]
		\centering
		\def\svgwidth{0.49\textwidth}
		%% Creator: Inkscape inkscape 0.92.3, www.inkscape.org
%% PDF/EPS/PS + LaTeX output extension by Johan Engelen, 2010
%% Accompanies image file '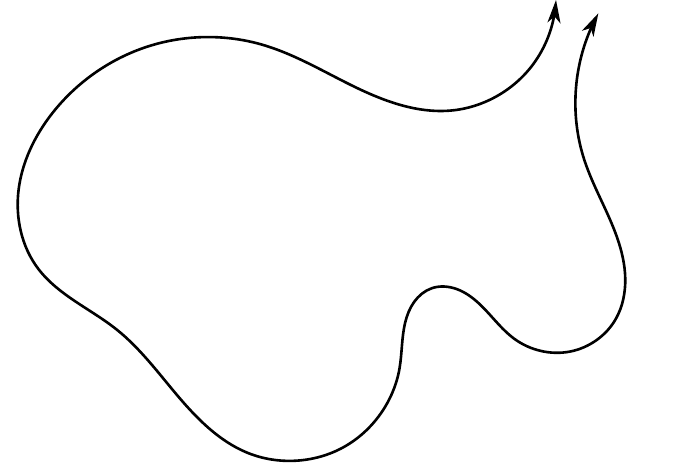' (pdf, eps, ps)
%%
%% To include the image in your LaTeX document, write
%%   \input{<filename>.pdf_tex}
%%  instead of
%%   \includegraphics{<filename>.pdf}
%% To scale the image, write
%%   \def\svgwidth{<desired width>}
%%   \input{<filename>.pdf_tex}
%%  instead of
%%   \includegraphics[width=<desired width>]{<filename>.pdf}
%%
%% Images with a different path to the parent latex file can
%% be accessed with the `import' package (which may need to be
%% installed) using
%%   \usepackage{import}
%% in the preamble, and then including the image with
%%   \import{<path to file>}{<filename>.pdf_tex}
%% Alternatively, one can specify
%%   \graphicspath{{<path to file>/}}
%% 
%% For more information, please see info/svg-inkscape on CTAN:
%%   http://tug.ctan.org/tex-archive/info/svg-inkscape
%%
\begingroup%
  \makeatletter%
  \providecommand\color[2][]{%
    \errmessage{(Inkscape) Color is used for the text in Inkscape, but the package 'color.sty' is not loaded}%
    \renewcommand\color[2][]{}%
  }%
  \providecommand\transparent[1]{%
    \errmessage{(Inkscape) Transparency is used (non-zero) for the text in Inkscape, but the package 'transparent.sty' is not loaded}%
    \renewcommand\transparent[1]{}%
  }%
  \providecommand\rotatebox[2]{#2}%
  \newcommand*\fsize{\dimexpr\f@size pt\relax}%
  \newcommand*\lineheight[1]{\fontsize{\fsize}{#1\fsize}\selectfont}%
  \ifx\svgwidth\undefined%
    \setlength{\unitlength}{195.64044735bp}%
    \ifx\svgscale\undefined%
      \relax%
    \else%
      \setlength{\unitlength}{\unitlength * \real{\svgscale}}%
    \fi%
  \else%
    \setlength{\unitlength}{\svgwidth}%
  \fi%
  \global\let\svgwidth\undefined%
  \global\let\svgscale\undefined%
  \makeatother%
  \begin{picture}(1,0.68040613)%
    \lineheight{1}%
    \setlength\tabcolsep{0pt}%
    \put(0,0){\includegraphics[width=\unitlength,page=1]{net_shortening_example_tex.pdf}}%
    \put(0.08408397,0.3484696){\color[rgb]{0,0,0}\makebox(0,0)[lt]{\lineheight{1.25}\smash{\begin{tabular}[t]{l}$e_{1,t}$\end{tabular}}}}%
    \put(0.76192975,0.36103211){\color[rgb]{0,0,0}\makebox(0,0)[lt]{\lineheight{1.25}\smash{\begin{tabular}[t]{l}$e_{2,t}$\end{tabular}}}}%
    \put(-0.00323457,0.57707645){\color[rgb]{0,0,0}\makebox(0,0)[lt]{\lineheight{1.25}\smash{\begin{tabular}[t]{l}$M$\end{tabular}}}}%
    \put(0,0){\includegraphics[width=\unitlength,page=2]{net_shortening_example_tex.pdf}}%
  \end{picture}%
\endgroup%

		\def\svgwidth{0.49\textwidth}
		%% Creator: Inkscape inkscape 0.92.3, www.inkscape.org
%% PDF/EPS/PS + LaTeX output extension by Johan Engelen, 2010
%% Accompanies image file '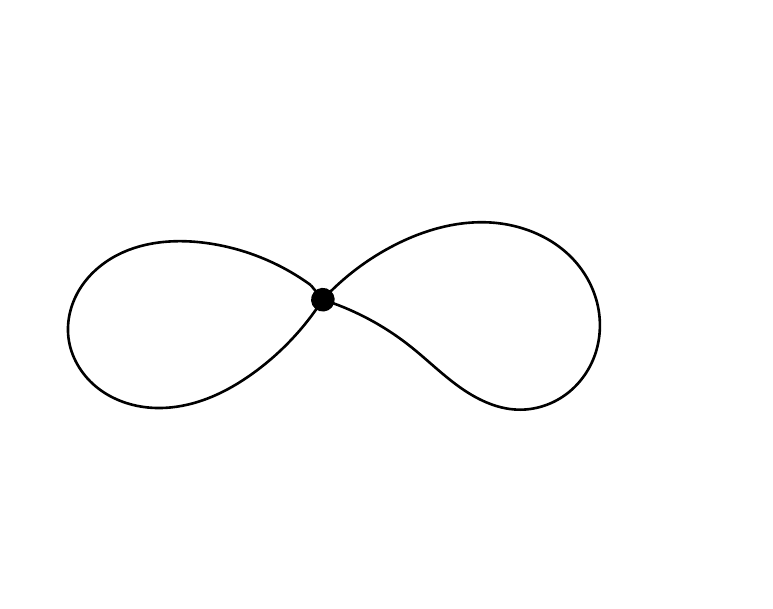' (pdf, eps, ps)
%%
%% To include the image in your LaTeX document, write
%%   \input{<filename>.pdf_tex}
%%  instead of
%%   \includegraphics{<filename>.pdf}
%% To scale the image, write
%%   \def\svgwidth{<desired width>}
%%   \input{<filename>.pdf_tex}
%%  instead of
%%   \includegraphics[width=<desired width>]{<filename>.pdf}
%%
%% Images with a different path to the parent latex file can
%% be accessed with the `import' package (which may need to be
%% installed) using
%%   \usepackage{import}
%% in the preamble, and then including the image with
%%   \import{<path to file>}{<filename>.pdf_tex}
%% Alternatively, one can specify
%%   \graphicspath{{<path to file>/}}
%% 
%% For more information, please see info/svg-inkscape on CTAN:
%%   http://tug.ctan.org/tex-archive/info/svg-inkscape
%%
\begingroup%
  \makeatletter%
  \providecommand\color[2][]{%
    \errmessage{(Inkscape) Color is used for the text in Inkscape, but the package 'color.sty' is not loaded}%
    \renewcommand\color[2][]{}%
  }%
  \providecommand\transparent[1]{%
    \errmessage{(Inkscape) Transparency is used (non-zero) for the text in Inkscape, but the package 'transparent.sty' is not loaded}%
    \renewcommand\transparent[1]{}%
  }%
  \providecommand\rotatebox[2]{#2}%
  \newcommand*\fsize{\dimexpr\f@size pt\relax}%
  \newcommand*\lineheight[1]{\fontsize{\fsize}{#1\fsize}\selectfont}%
  \ifx\svgwidth\undefined%
    \setlength{\unitlength}{222.56621827bp}%
    \ifx\svgscale\undefined%
      \relax%
    \else%
      \setlength{\unitlength}{\unitlength * \real{\svgscale}}%
    \fi%
  \else%
    \setlength{\unitlength}{\svgwidth}%
  \fi%
  \global\let\svgwidth\undefined%
  \global\let\svgscale\undefined%
  \makeatother%
  \begin{picture}(1,0.78702253)%
    \lineheight{1}%
    \setlength\tabcolsep{0pt}%
    \put(0,0){\includegraphics[width=\unitlength,page=1]{net_shortening_tex.pdf}}%
    \put(-0.00284325,0.33801676){\color[rgb]{0,0,0}\makebox(0,0)[lt]{\lineheight{1.25}\smash{\begin{tabular}[t]{l}$e_{1,t}$\end{tabular}}}}%
    \put(0.79073118,0.34320692){\color[rgb]{0,0,0}\makebox(0,0)[lt]{\lineheight{1.25}\smash{\begin{tabular}[t]{l}$e_{2,t}$\end{tabular}}}}%
    \put(0,0){\includegraphics[width=\unitlength,page=2]{net_shortening_tex.pdf}}%
    \put(0.55839813,0.09365997){\color[rgb]{0,0,0}\makebox(0,0)[lt]{\lineheight{1.25}\smash{\begin{tabular}[t]{l}$S_t$\end{tabular}}}}%
  \end{picture}%
\endgroup%

		\caption{The construction of the punctured sphere $S_t$ for some fixed $t$.}
		\label{fig:net_shortening}
	\end{figure}
	\par 
	By assumption, the sets $\Sigma_{i,0}=\Omega_i$ are disjoint (except for their shared boundary) and hence the union of the images of the three initial homotopies covers $M$ without overlaps. Therefore $S_0\simeq M$ in a natural way, and hence $M_0\simeq M/\mathord\sim$. Moreover, the family $M_t$ is continuous. This is because the curve shortening process is continuous on compact surfaces, and we are only defining $M_t$ in the finite time interval $[0,t_f]$. For all $t\in[0,t_f)$, the surface $M_t$ is homeomorphic to $S^2$, as each connected component of $V$ is collapsed to a point under the quotient map. However, $M_{t_f}$ is a point, because either $e_{1,t_f}\cup e_{2,t_f} $ itself is a point, or $e_{1,t_f}\cup e_{2,t_f} \subset V$ and hence by convexity the image of each $e_{i,t_f}$ under the shortening process lies entirely in $V$. Thus we have arrived at a contradictory conclusion. Therefore, some step of our procedure must have been obstructed by the presence of a closed geodesic, which will necessarily have length at most $L(e_1)+L(e_2)$.
\end{proof}

\subsection{Main Theorem}
\label{secmain_theorem}

	We now collect the above cases and prove our main result.

\begin{theorem}
	\label{main_theorem}
	Suppose $M$ is a complete, orientable surface with finite area $A$ and $n$ ends. Let $l(M)$ be the length of a shortest closed geodesic on $M$.
	\begin{enumerate}
		\item If $n\leq1$, then $l(M)\leq 4\sqrt{2A}$.
		\item If $n\geq2$, then $l(M)\leq 2\sqrt{2A}$.
	\end{enumerate}
\end{theorem}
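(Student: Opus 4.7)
The plan is to deduce the theorem as a direct combination of the lemmas established in the preceding sections, combined with the known bound for the compact case. The compact case $n=0$ is not actually proved here: as mentioned in the introduction, for closed orientable surfaces the bound $l(M)\leq 4\sqrt{2A}$ follows from the work of Hebda/Burago-Zalgaller (for non-simply connected surfaces, where one even has $l(M)\leq\sqrt{2A}$) together with Nabutovsky-Rotman/Sabourau (for $S^2$). So the theorem reduces to two separate contradiction arguments for $n=1$ and $n\geq 2$.

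For part (1) with $n=1$, I would argue by contradiction: assume $M$ contains no closed geodesic of length at most $4\sqrt{2A}$. Then the hypothesis of Lemma \ref{1_end_core} is satisfied, producing a pair of geodesic loops $e_1,e_2$ with a common vertex, convex to disjoint regions, and each of length at most $\sqrt{2A}$. But then Lemma \ref{core-filling} applied to this pair yields a closed geodesic of length at most $L(e_1)+L(e_2)\leq 2\sqrt{2A}<4\sqrt{2A}$, contradicting the standing assumption.

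For part (2) with $n\geq 2$, I would likewise argue by contradiction: assume $M$ contains no closed geodesic of length at most $2\sqrt{2A}$. In particular there is no closed geodesic of length at most $\sqrt{2A}$, so the hypothesis of Lemma \ref{2_ends_core} is met and we obtain a pair of geodesic loops with a shared vertex, convex to disjoint regions, each of length at most $\sqrt{2A}$. Applying Lemma \ref{core-filling} to this pair produces a closed geodesic of length at most $2\sqrt{2A}$, again a contradiction.

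The main obstacle at this stage is essentially none: all the hard work has been done in Lemmas \ref{1_end_core}, \ref{2_ends_core}, and \ref{core-filling}. The only minor subtlety is ensuring the right constants line up, namely that in the one-ended case the constant $4\sqrt{2A}$ in Lemma \ref{1_end_core} was chosen precisely so that the final bound from Lemma \ref{core-filling} (which is only $2\sqrt{2A}$) sits strictly below the contradiction threshold, and that in the two-ended case the Lemma \ref{2_ends_core} threshold $\sqrt{2A}$ matches the individual loop length bound rather than the doubled bound $2\sqrt{2A}$ claimed in the theorem.
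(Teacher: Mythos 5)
Your proof is correct and follows essentially the same logic as the paper's own proof, simply phrased in explicit contradiction form rather than as a disjunction; the constants line up exactly as you note. The paper likewise handles $n=0$ by citation and deduces $n=1$ and $n\geq 2$ by applying Lemma \ref{1_end_core} (resp.\ Lemma \ref{2_ends_core}) followed by Lemma \ref{core-filling}.
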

	
\begin{proof}
	The case where $n=0$ is due to the previous results described at the beginning of this paper (e.g., \cite{rotman2006, croke1988}). 
	If $n=1$, by Lemma \ref{1_end_core} we know that either $l(M)\leq4\sqrt{2A}$ or $M$ admits a pair of geodesic loops each of length at most $\sqrt{2A}$ with a common vertex such that the two loops are convex to disjoint regions. In the second case, we apply Lemma \ref{core-filling} to obtain a closed geodesic of length at most $2\sqrt{2A}$. Therefore it must be the case that $l(M)\leq 4\sqrt{2A}$.
	If $n\geq2$, then by Lemma \ref{2_ends_core} either $l(M)\leq\sqrt{2A}$ or the hypothesis of Lemma \ref{core-filling} is satisfied by a pair of loops that each have length less than $\sqrt{2A}$. Thus we again conclude that $M$ has a closed geodesic of length at most $2\sqrt{2A}$, and hence $l(M)\leq 2\sqrt{2A}$.
\end{proof}
\begin{corollary}
	Suppose $M$ is a complete, non-orientable surface. Let $\tilde{M}$ be its orientable double cover bestowed with the covering metric. Suppose $\tilde{M}$ has finite area $\tilde{A}$ and $\tilde{n}$ ends. Let $l(M)$ be the length of a shortest closed geodesic on $M$.
	\begin{enumerate}
		\item If $\tilde{n}\leq1$, then $l(M)\leq 4\sqrt{2\tilde{A}}$.
		\item If $\tilde{n}\geq2$, then $l(M)\leq 2\sqrt{2\tilde{A}}$.
	\end{enumerate}
\end{corollary}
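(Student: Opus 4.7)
The plan is to reduce the non-orientable case directly to Theorem~\ref{main_theorem} by lifting/projecting along the orientable double cover $\pi:\tilde{M}\to M$. First, I would note that the covering metric on $\tilde{M}$ is chosen precisely so that $\pi$ is a local isometry; in particular it sends geodesics to geodesics without altering their length. Moreover, $\tilde{M}$ is by construction orientable, and if $M$ is complete then so is $\tilde{M}$, so the hypotheses of Theorem~\ref{main_theorem} are satisfied for $\tilde{M}$ with area $\tilde{A}$ and $\tilde{n}$ ends.

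Applying Theorem~\ref{main_theorem} to $\tilde{M}$ produces a closed geodesic $\tilde{\gamma}\subset\tilde{M}$ of length at most $4\sqrt{2\tilde{A}}$ if $\tilde{n}\leq 1$ and at most $2\sqrt{2\tilde{A}}$ if $\tilde{n}\geq 2$. I would then project: set $\gamma=\pi\circ\tilde{\gamma}$. Since $\pi$ is a local isometry, $\gamma$ is a geodesic in $M$; since $\tilde{\gamma}(0)=\tilde{\gamma}(L(\tilde{\gamma}))$ and the two boundary velocity vectors of $\tilde{\gamma}$ coincide, their images under $d\pi$ coincide as well, so $\gamma$ is a closed geodesic in $M$. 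Because $\pi$ preserves length pointwise, $L(\gamma)\leq L(\tilde{\gamma})$ (with equality unless $\gamma$ is a multiple cover of a shorter closed geodesic, in which case $l(M)$ is even smaller, which only strengthens the bound). In either case
\begin{align*}
l(M)\leq L(\gamma)\leq L(\tilde{\gamma})\leq\begin{cases}4\sqrt{2\tilde{A}},&\tilde{n}\leq 1,\\ 2\sqrt{2\tilde{A}},&\tilde{n}\geq 2,\end{cases}
\end{align*}
which is the claimed bound.

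There is essentially no obstacle here; the one point to verify carefully is that $\gamma$ is genuinely a \emph{closed} geodesic in $M$ (not merely a geodesic loop), which follows from the fact that $d\pi$ commutes with the tangent identification at the basepoint of $\tilde{\gamma}$. The only degenerate possibility would be that $\tilde{\gamma}$ is the lift of a non-orientable ``one-sided'' curve in $M$ traversed twice, but this still projects to a closed geodesic in $M$ of length $L(\tilde{\gamma})/2$, so the bound still holds.
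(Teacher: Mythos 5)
Your proposal is correct and follows exactly the paper's approach: apply the main theorem to the orientable double cover $\tilde{M}$, then push the resulting short closed geodesic down via the covering projection $\pi$, which is a local isometry and hence length-preserving and geodesic-preserving. Your expanded remarks on why the projection is still a smooth closed geodesic (matching velocity vectors, possible multiple covering) are a fine elaboration of the one-line argument given in the paper, but add no essentially new ideas.
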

\begin{proof}
	By applying the proof above we can find a short geodesic on $\tilde{M}$ that will be mapped by the orientation covering to a geodesic on $M$ of at most equal length. 
\end{proof}

\bibliography{geodesic}

\end{document}